\numberwithin{equation}{section}
\newtheorem{theorem}{Theorem}[section]
\newtheorem{lemma}{Lemma}[section]
\newtheorem{definition}{Definition}[section]
\newtheorem{proposition}{Proposition}[section]
\newtheorem{remark}{Remark}[section]
\title{Elements of Convex Geometry in Hadamard Manifolds with Application to Equilibrium Problems}
\author{
G. C. Bento\thanks{The author was partially supported by FAPEG 201710267000532, PRONEX/FAPERJ and CNPq Grants 310864/2017-8, 314106/2020-0. IME-Universidade Federal de Goi\'as,
Goi\^ania-GO 74001-970, BR (Email: {\tt glaydston@ufg.br}) - {\bf Corresponding author}}
\and
J. X. Cruz Neto
\thanks{The author was partially supported by CNPq Grants 308330/2018-8. CCN, DM-Universidade Federal do Piauí,
Teresina, PI 64049-550, BR (Email: {\tt jxavier@ufpi.edu.br})  }\and
I. D. L. Melo  \thanks{This author was partially supported by FAPEPI.  CCN, DM-Universidade Federal do Piauí,
Teresina, PI 64049-550, BR (Email: {\tt italodowell@ufpi.edu.br }).}
}
\date{}
\begin{document}

\maketitle

\begin{abstract}
In this paper, is introduced a new proposal of resolvent for equilibrium problems in terms of the Busemann's function. A great advantage of this new proposal is that, in addition to be a natural extension of the proposal in the linear setting by Combettes and Hirstoaga in \cite{combettes2005equilibrium}, the new term that performs  regularization is  a convex function in general Hadamard manifolds, being a first step to fully answer to the problem posed by Cruz Neto et al.in \cite[Section 5]{cruz2017note}. 
During our study, some elements of convex analysis are explored in the context of Hadamard manifolds, which are interesting on their own. In particular, we introduce a new definition of  convex combination (now commutative) of any finite collection of points and present the realization of an associated Jensen-type inequality.  
\end{abstract}
{\bf Keywords}: Equilibrium problem; KKM's lemma; Helly's theorem; Jensen's inequality; Hadamard manifold\\
{\bf Subclass}: 47N10; 47H05; 52A37


\maketitle

\section{Introduction}
In this paper, some elements of convex analysis are explored in the context of Hadamard manifolds, among which we highlight the KKM's lemma. It is was introduced in 1929 by three Polish mathematicians, Knaste, Kuratowski, Mazurkiewicz; see \cite{Knaster1929}, and works as follows: given  \mbox{co}$(\{x_1,\ldots, x_{n+1}\})$ (the convex  hull  of $\{x_1,\ldots,x_{n+1}\}$) and $C_1, \ldots, C_{n+1}$ closed subsets of $\mathbb{R}^n$, if for each subset $I\subset \{1,\ldots, n+1\}$ one has \mbox{co}$(\{x_i:i\in I\})\subset \cup_{i\in I}C_i$, then $\cap_{i=1}^{n+1}C_i\neq\emptyset$. There is a vast literature dealing with generalizations of this simpler version of KKM lemma. In the linear context, \cite{granas2003} is an important 
reference with excellent discussion and main references on the topic as well as important applications, among which we mention the existence of solution for equilibrium problems. On this specific point, see, for example, \cite{KyFan1992} and the references therein. When highlighting equilibrium problems, other problems such as optimization problems, Nash equilibrium problems, complementarity problems, fixed point problems and variational inequality problems are also considered, since all these problems can be formulated as equilibrium problems; see, for example, \cite{blum1994,bianchi1996} and the references therein. Regardless of this  in the linear setting, we would like to mention about reference \cite{de2019}, which does not only present a beautiful background on KKM's lemma with a rich list of references on the subject, but also presents important connections with the theorem of Helly and Carathéodory. In the Riemannian context, we highlight the following references that deal the KKM's lemma \cite{colao2012equilibrium,zhou2019,park2019riemannian}. As a special case, see \cite{niculescu2009fan,park2020coupled}, in which an approach in other contexts that has Hadamard manifold as a particular case have been presented. The main purpose of generalizing the KKM's lemma to a Hadamard manifold is to establish  existence of solution and, in particular, to ensure the well-definedness of the resolvent and the proximal point method for equilibrium problems. A definition of resolvent in the Riemannian context associated with a bifunction $F(\cdot,\cdot)$ has been presented in \cite{colao2012equilibrium}. It is a set-valued operator $J_\lambda^F:M\rightrightarrows\Omega$, $\lambda>0$, which is given as follows:
\begin{equation}\label{bi-Function}
J_{\lambda}^F(x):=\{z\in \Omega : \lambda F(z,y)-\langle \mbox{exp}^{-1}_zx,\mbox{exp}^{-1}_zy\rangle\geq 0,\quad y\in \Omega\}. 
\end{equation}
Despite (\ref{bi-Function}) being a natural extension
of the resolvent introduced in the linear setting in \cite{combettes2005equilibrium}, the well-definedness of the resolvent and consequently of the proximal point algorithm for solving equilibrium problems depends on the convexity of the function $M\ni y\mapsto \langle u_z, \mbox{exp}^{-1}_zy\rangle$, $u_z\in T_zM$, that has been shown not to happens in general; see \cite{wang2016some,cruz2017note}. An important contribution of this paper is the introduction of a new proposal of a resolvent in terms of the Busemann's function. The great advantage of this new proposal is that in addition to being a natural extension of that presented in \cite{combettes2005equilibrium}, the new term that performs  regularization is  a convex function in general Hadamard manifolds being a first step to fully answer to the problem posed in \cite[Section 5]{cruz2017note}.

Another important convex analysis result that we explore in this present paper is the Helly's theorem. It was introduced by Edward Helly in the linear setting in \cite{helly1923} and gives sufficient conditions for a family of convex sets to have a nonempty intersection. Over the years, a large variety of proofs as well as applications have been presented; see, for example, \cite{eggleston1958, danzer1963helly, lay2007}, where the relation with other important classic results of convex geometry can also be found.
As far as we know, the first approach to Helly's theorem in the Riemannian context was presented in \cite{ledyaev2006helly}, where specifically the authors generalized the classical Helly’s theorem concerning the intersection of convex sets in $\mathbb{R}^n$ for manifolds of nonpositive curvature (for example, Hadamard manifolds). The main result presented in \cite{ledyaev2006helly} considers a certain $(CC)-$condition on a subset $K$ of the Riemannian manifold, which ensures that the convex hull of any finite set of points  $D\subset K$ is a compact set. In the linear setting, this condition is obtained as a consequence of the Carathéodory's theorem, one of the pillars of combinatorial convexity introduced by Constantin Carathéodory in \cite{caratheodory1907variabilitatsbereich}. However, as noted in \cite[Remark~3.2]{zhou2019}, its validity is not known even in Hadamard manifolds.
The absence of a Carathéodory theorem leads to some significant obstacles, for example, ``what is the convex hull of three points in a 3 or higher dimensional Riemannian manifold?" This issue was highlighted by Berger in \cite[page 253]{berger2012panoramic}, who also conjectured that said convex hull is not closed, except in very special cases. Based on the aforementioned discussion, as an additional contribution of our paper we mention  provision of an alternative proof for \cite[Theorem 4.2]{ledyaev2006helly} (see Theorem~\ref {Hellymain1} in the present paper), which is new even in a linear setting. In addition, we also present a proof for a version of the KKM's lemma using the Helly's theorem (see Theorem~\ref{TheoremKKM}). It is worth mentioning that others versions of the KKM lemma in the Riemannian setting can be found, for example, in \cite{colao2012equilibrium,zhou2019,park2019riemannian}. However, since this topic dealing with combinatorial convexity shows to have an interdisciplinary character, the version of the KKM lemma presented in the last result seems more appropriate, inclusively, for our purposes dealing with existence result of solutions for equilibrium problems.

As a result of the contributions  aforementioned, some new results of convex analysis in Hadamard manifolds are also introduced, which are interesting on their own. We highlight the following:
\begin{itemize}
\item[a)] Convexity of the interior and of the closure of a convex set; see Proposition~\ref{lemmaBBF2020};

\item[b)] Upper semicontinuity of the Busemann's function in the variable that determines the ray from which the Busemann function is defined; see Lemma~\ref{lemmaUpperSem}.

\end{itemize}
In addition, taking into account the notion of the convex hull explored in the discussion involving Helly's theorem and KKM's lemma, and based on the notion of pseudo-convex combination presented in \cite{zhou2019}, we introduce a new definition of convex combination (now commutative) of any finite collection of points, and we prove Jensen's inequality by considering both the pseudo-convexity and our proposal of convex combination. 
 It is worth mentioning that this important inequality is attributed to the Danish mathematician Johan Jensen, \cite{jensen1906}. It has appeared in the nonlinear setting where the ``convex combination" of the points involved is the ``center of mass" (barycenter) of the points both in the discrete case and also in the continuous case associated with a measure of probability; see, for example, \cite{jost1994,sturm2003probability, Bacak2014:1} and their references therein.

The remainder of this paper is organized as follows: in Section~\ref{section2}, some notations, terminology,  concept and results related to Riemannian geometry are presented. We also record and introduce some basic concepts and results of convex analysis. In Section~\ref{section3}, we explore the Helly’s theorem from a theoretical viewpoint and obtain an alternative version for the Knaster-Kuratowski-Mazurkiewicz theorem, also known as KKM's lemma. In Section~\ref{section4}, we explore KKM's lemma to establish existence of solutions for equilibrium problems and well-definedness of a new resolvent associeted with equilibrium problems.

\section{Preliminaries}\label{section2}
In this section, we set the notations, terminology and some pertinent concept and results related to Riemannian geometry. We also introduce some basic concepts and results of convex analysis, which are interesting on their own, including due to the original proofs here built. 

\subsection{Notation and terminology}
 this section, we present some pertinent concept and results related to Riemannian geometry. For more details see, for example, \cite{carmo1992,sakai1996,paternain2012geodesic}.

Assuming that $M$ is a complete and connected Riemannian manifold,  from Hopf-Rinow theorem it is known that any pair of points in $M$ can be joined by a minimal geodesic. Moreover, $(M,d)$ is a complete metric space, where $d$ denotes the Riemannian distance, and bounded closed subsets are compact. We denote by $T_xM$ the  tangent space of $M$ at $x$, by $TM=\cup_{x\in M}T_xM$ the tangent bundle of $M$.
Let $g$ be the Riemannian metric of $M$, also denoted by $\langle  ~\cdot, \cdot  ~ \rangle$, with the corresponding norm given by $\| \cdot \|$ and $\pi\colon TM\to M$ the canonical projection.  For $\theta=(x,v)\in TM$,  let $\gamma_{\theta}(\cdot)$ denote the unique geodesic with initial conditions $\gamma_{\theta}(0)=x$ and $\gamma_{\theta}'(0)=v$. For a given $t\in \mathbb{R}$, let $\phi^t:TM \to TM$ be the diffeomorphism given by  $\phi^t(\theta)=(\gamma_{\theta}(t),\gamma_{\theta}'(t))$. Recall that this family is a flow (called the geodesic flow) in the sense that  $\phi^{t+s}(\cdot)=(\phi^{t}\circ \phi^{s})(\cdot)$ for all $t,s\in \mathbb{R}$. 
The exponential map $\mbox{exp}:TM\to M$ is defined by $\mbox{exp}(\theta):=\gamma_{\theta}(1)$. For $x\in M$ fixed, $\mbox{exp}_xv :=\gamma_{v}(1,x)$. 
Consider $TM$ furnished with the usual Sasaki metric, the projection $\pi:TM \to M$ is continuous. For $x,y\in M$, denotes by $\Gamma_{x,y}$ the collection of all $C^1$ curves joining $x,y$. The function  $M\ni (x,y)\mapsto d(x,y)=\mbox{inf}\{l(\gamma):\gamma\in \Gamma(x,y)\}$, where
\[
l(\gamma):=\int_{0}^1\|\gamma'(t)\|dt \quad (\mbox{length of}\; \gamma),
\]
represent the  Riemannian distance.
Given a point $x\in M$ and $D\subset M$, the distance from $x$ to $D$ is defined by $d_D(x):=\mbox{inf}\{d(y,x): y\in D\}$.
A complete, simply connected Riemannian manifold of nonpositive sectional curvature is called a Hadamard manifold. The following result is well known (see, for example, \cite[Theorem 4.1, p. 221]{sakai1996}).

\begin{proposition} 
Let $M$ be a Hadamard manifold and $x\in M$. Then, $\emph{exp}_{x}(\cdot)$ is a diffeomorphism, and for any two points $x,y\in M$ there exists an unique normalized geodesic joining $x$ to $y$, which is, in fact, a minimal geodesic.  Note that $d(x,y)=\|\emph{exp}_{x}^{-1}y\|.$
\end{proposition}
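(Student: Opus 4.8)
The plan is to recognize this statement as the classical Cartan--Hadamard theorem and to reduce everything to the single assertion that $\exp_x\colon T_xM\to M$ is a diffeomorphism; once that is established, the uniqueness of the connecting geodesic and the distance formula follow at once. Since $M$ is complete, Hopf--Rinow guarantees that $\exp_x$ is defined on all of $T_xM$, so it remains to prove bijectivity together with smoothness of the inverse. I would organize the argument in two stages: first show that $\exp_x$ is a local diffeomorphism, and then upgrade this to a global diffeomorphism by a covering-space argument that exploits the nonpositive curvature together with simple connectivity.

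For the first stage, the key point is that nonpositive sectional curvature rules out conjugate points. Recall that $d(\exp_x)_v$ is singular precisely when there is a nontrivial Jacobi field along $\gamma_v$ vanishing at both endpoints. Along a geodesic $\gamma$, a Jacobi field $J$ satisfies $J''+R(J,\gamma')\gamma'=0$, whence
\[
\frac{d^2}{dt^2}\|J\|^2 = 2\|J'\|^2 - 2\langle R(J,\gamma')\gamma',J\rangle \ge 0,
\]
because the curvature term equals $K(J,\gamma')\bigl(\|J\|^2\|\gamma'\|^2-\langle J,\gamma'\rangle^2\bigr)$ with $K\le 0$. Thus $t\mapsto\|J\|^2$ is convex; if $J(0)=0$ and $J$ vanishes again at some $t_1>0$, convexity forces $J\equiv 0$ on $[0,t_1]$ and then $J\equiv 0$ by uniqueness for the Jacobi equation. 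Hence there are no conjugate points, every $d(\exp_x)_v$ is nonsingular, and $\exp_x$ is an immersion between manifolds of equal dimension, that is, a local diffeomorphism.

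For the second stage, I would pull back the metric $g$ along the local diffeomorphism $\exp_x$ to obtain a metric $\tilde g:=\exp_x^{*}g$ on $T_xM$, so that $\exp_x\colon(T_xM,\tilde g)\to(M,g)$ becomes a local isometry. The rays $s\mapsto sv$ emanating from the origin project to the geodesics $s\mapsto\gamma_v(s)$ of $(M,g)$; since local isometries carry geodesics to geodesics, these rays are $\tilde g$-geodesics defined for all $s\in\mathbb{R}$. Because every geodesic issuing from $0$ is thus complete, Hopf--Rinow shows that $(T_xM,\tilde g)$ is complete, and a local isometry from a complete manifold onto a connected manifold is necessarily a covering map. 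As $M$ is simply connected and $T_xM$ is connected, this covering is trivial, so $\exp_x$ is a bijective local diffeomorphism, hence a diffeomorphism.

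Finally, given $y\in M$, set $v:=\exp_x^{-1}y$; then $t\mapsto\exp_x(tv)$, $t\in[0,1]$, is a geodesic from $x$ to $y$, and injectivity of $\exp_x$ makes it the unique such normalized geodesic. Hopf--Rinow provides a minimizing geodesic between $x$ and $y$, which must coincide with this one, giving $d(x,y)=\|v\|=\|\exp_x^{-1}y\|$. The main obstacle is the covering-map step: verifying that the pulled-back metric is complete and invoking the theorem that a local isometry out of a complete manifold is a covering map, after which simple connectivity finishes the argument.
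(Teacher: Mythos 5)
Your proof is correct: it is the classical Cartan--Hadamard argument (no conjugate points via convexity of $\|J\|^2$ for Jacobi fields under $K\le 0$, hence $\exp_x$ is a local diffeomorphism; pull back the metric, use completeness of the radial geodesics and the covering-map theorem, and conclude by simple connectivity), and the uniqueness of the connecting geodesic and the formula $d(x,y)=\|\exp_x^{-1}y\|$ follow as you say. The paper itself offers no proof of this proposition --- it is stated as a well-known fact with a citation to Sakai's book --- so there is nothing to compare beyond noting that your self-contained argument is exactly the standard proof found in that reference.
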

Throughout the remainder of this paper, unless otherwise stated, we always assume that $M$ is an $n-$dimensional Hadamard manifold.

\subsection{Convex Analysis Elements}\label{Sec2.2}
In this section, we record some basic concepts and results of convex analysis. From the concept of pseudo-convex combination presented in \cite{zhou2019}, we introduce a new notion of convex combination, now commutative, we extend the result involving a Jensen-type inequality to these more general convex combinations, we highlight and analyze an important conjecture attributed to Berger that is associated with the convex hull of a finite set of points in the Riemannian context. In addition, some basic results of convex analysis, not identified in the literature and which are useful in the rest of the paper, are presented and proved.

Since $M$ was assumed to be a Hadamard manifold, for any $x,y\in M$, there exists an unique minimal geodesic $\gamma_{x,y}(t):=\mbox{exp}_{x}(t\mbox{exp}^{-1}_xy)$, $t\in [0,1]$, joining $x$ to $y$. 
A set $D\subseteq M$ is said to be convex if for any two points $x,y\in D$, the geodesic segment $\gamma_{x,y}(t)\in D$, $t\in [0,1]$. 
Given an arbitrary set,  $\mathcal{B} \subset M$,   the minimal convex subset that contains  $\mathcal{B}$  is called  the convex hull of $\mathcal{B}$ denoted by $ \mbox{co}(\mathcal{B})$; see \cite{jost1994} where the author assures, among other things, that \mbox{co}$(\mathcal{B})=\cup_{j=0}^{\infty}C_j$, where $C_0=\mathcal{B}$ and $C_j=\{z\in\gamma_{x,y}([0,1]): x, y \in C_{j-1}\}$.



Next result is useful and a proof can be found for example in \cite{batista2020}.
\begin{lemma}\label{eq:ContExp}
Let $\bar{p},\bar{q}\in M$ and $\{p^k\},\{q^k\}\subset M$ be such that $\lim_{k\to +\infty} p^k=\bar{p}$ and $\lim_{k\to +\infty}q^k=\bar{q}$. Then,
$\lim_{k\to +\infty}  (p^k,\exp^{-1}_{p^k},q^k)=(\bar{p},\exp^{-1}_{\bar{p}},\bar{q})$.
\end{lemma}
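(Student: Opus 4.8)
The printed statement should be read as the joint continuity of the inverse exponential map, viewed as the assignment $(p,q)\mapsto(p,\exp^{-1}_pq)\in TM$ (the comma in the displayed limit is a typo for the tangent vector $\exp^{-1}_{p^k}q^k$). My plan is to prove it by a subsequence argument resting on compactness in $TM$ together with the uniqueness supplied by Proposition~2.1. First I would set $v^k:=\exp^{-1}_{p^k}q^k\in T_{p^k}M$ and $\bar v:=\exp^{-1}_{\bar p}\bar q\in T_{\bar p}M$, so that, since $p^k\to\bar p$ already holds by hypothesis and $\pi$ is continuous, the whole content of the lemma reduces to showing that the fibre vectors satisfy $(p^k,v^k)\to(\bar p,\bar v)$ in the Sasaki metric.

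The key preliminary observation is that, by Proposition~2.1, $\|v^k\|=d(p^k,q^k)$, and since the Riemannian distance is continuous with $p^k\to\bar p$ and $q^k\to\bar q$, the norms $\|v^k\|$ are bounded, say $\|v^k\|\le R$. I would then note that the set $\{(p,v)\in TM: p\in K,\ \|v\|\le R\}$ is a closed ball bundle over the compact base $K=\{p^k:k\in\mathbb{N}\}\cup\{\bar p\}$ (a convergent sequence together with its limit), and is therefore compact. Consequently $\{(p^k,v^k)\}$ lies in a compact subset of $TM$ and admits a convergent subsequence $(p^{k_j},v^{k_j})\to(\bar p,w)$ for some $w\in T_{\bar p}M$, the base point being forced to equal $\bar p$ because $p^k\to\bar p$.

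To identify the limit, I would invoke continuity of $\exp:TM\to M$: on one hand $\exp(p^{k_j},v^{k_j})=q^{k_j}\to\bar q$, while on the other hand $\exp(p^{k_j},v^{k_j})\to\exp_{\bar p}w$, whence $\exp_{\bar p}w=\bar q$. Since $\exp_{\bar p}$ is a diffeomorphism by Proposition~2.1, it is injective, so $w=\exp^{-1}_{\bar p}\bar q=\bar v$. Thus every convergent subsequence of the relatively compact sequence $\{(p^k,v^k)\}$ has the same limit $(\bar p,\bar v)$, and the standard fact that a relatively compact sequence all of whose cluster points coincide is itself convergent yields $(p^k,v^k)\to(\bar p,\bar v)$, as required.

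The crux of the argument is the identification step: it is the global injectivity of $\exp_{\bar p}$ — available precisely because $M$ is a Hadamard manifold (Proposition~2.1) — that pins the subsequential limit down to $\bar v$. On a general manifold $\exp^{-1}_p$ need not even be globally defined, so this is exactly where the hypothesis does its work; the compactness step, by contrast, is routine. An alternative and more structural route would dispense with sequences altogether: the map $E:TM\to M\times M$, $(p,v)\mapsto(p,\exp_pv)$, is smooth and, on a Hadamard manifold, a bijection whose differential is everywhere nonsingular, hence a global diffeomorphism; its inverse is exactly $(p,q)\mapsto(p,\exp^{-1}_pq)$, which is therefore smooth and in particular continuous, giving the lemma at once.
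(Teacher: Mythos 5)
Your proof is correct. The paper does not actually prove this lemma --- it only cites \cite{batista2020} --- but your argument (bound $\|v^k\|=d(p^k,q^k)$ via Proposition~2.1, extract a convergent subsequence from the compact ball bundle in $TM$, identify the cluster point using continuity of $\exp:TM\to M$ and global injectivity of $\exp_{\bar p}$) is exactly the pattern the authors themselves deploy inside the proof of Proposition~\ref{lemmaBBF2020}(b), so it is essentially the paper's own approach; your closing remark that $(p,v)\mapsto(p,\exp_p v)$ is a global diffeomorphism of $TM$ onto $M\times M$ gives the cleanest one-line justification.
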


The next proposition contains some canonical results in the Euclidean setting that we did not identify in the literature in the Riemannian context.  As it is a basic tool which is a fundamental part in the construction of the proof of Theorem~\ref{TheoremKKM} and can be invoked in Section~\ref {4.1}.
\begin{proposition}\label{lemmaBBF2020}
Let $X\subset M$ be a convex set. Then:
\begin{itemize}
    \item [(a)] the closure of $X$ in $M$, denoted by $\overline{X}$, is a convex set;
    \item[(b)] If $x \in \emph{int}(X) $ and $y \in \overline{X}$ then $\gamma_{x,y}([0,1)) \subset \emph{int}(X)$;
    \item [(c)] the interior of $X$ in $M$, denoted by $\emph{int}(X)$ is a convex set.
\end{itemize}
\end{proposition}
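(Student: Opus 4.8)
The plan is to establish the three items in order, deriving (c) from (b), and to treat (b) as the essential content. For (a), I would take $x,y\in\overline{X}$ together with sequences $\{x^k\},\{y^k\}\subset X$ such that $x^k\to x$ and $y^k\to y$. Convexity of $X$ gives $\gamma_{x^k,y^k}(t)=\exp_{x^k}\!\bigl(t\exp^{-1}_{x^k}y^k\bigr)\in X$ for every $t\in[0,1]$. By Lemma~\ref{eq:ContExp} the pairs $(x^k,\exp^{-1}_{x^k}y^k)$ converge to $(x,\exp^{-1}_x y)$ in $TM$, and continuity of the exponential map then forces $\gamma_{x^k,y^k}(t)\to\gamma_{x,y}(t)$. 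Hence $\gamma_{x,y}(t)\in\overline{X}$, so $\overline{X}$ is convex.

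For (b), fix $x\in\mathrm{int}(X)$, $y\in\overline{X}$ and $t\in[0,1)$, and set $z=\gamma_{x,y}(t)$. The key observation is that for each fixed $q\in M$ the map
\[
G^q(p):=\gamma_{p,q}(t)=\gamma_{q,p}(1-t)=\exp_q\bigl((1-t)\exp^{-1}_q p\bigr)
\]
is a diffeomorphism of $M$, being the composition of $\exp^{-1}_q$, the linear rescaling by the factor $(1-t)\neq 0$ on $T_qM$, and $\exp_q$, each of which is a diffeomorphism because $M$ is Hadamard. Choose $r>0$ with $B(x,r)\subset X$. Since $G^q$ is bijective, the unique preimage of $z$ is $p_0(q):=\exp_q\bigl(\tfrac{1}{1-t}\exp^{-1}_q z\bigr)$, which depends continuously on $q$ by Lemma~\ref{eq:ContExp}; moreover, using $\exp^{-1}_y z=(1-t)\exp^{-1}_y x$ one computes directly that $p_0(y)=x$. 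Therefore $p_0(q)\to x$ as $q\to y$, and because $y\in\overline{X}$ I may select $q\in X$ close enough to $y$ that $p_0(q)\in B(x,r)$. As $G^q$ is an open map, $G^q(B(x,r))$ is then an open set containing $z=G^q(p_0(q))$; and since every $p\in B(x,r)\subset X$ together with $q\in X$ yields $\gamma_{p,q}(t)\in X$ by convexity, this open neighbourhood lies in $X$. Hence $z\in\mathrm{int}(X)$, and letting $t$ range over $[0,1)$ gives $\gamma_{x,y}([0,1))\subset\mathrm{int}(X)$.

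Item (c) is then immediate: for $x,y\in\mathrm{int}(X)$ and $t\in[0,1)$ apply (b) with $y\in\mathrm{int}(X)\subset\overline{X}$ to obtain $\gamma_{x,y}(t)\in\mathrm{int}(X)$, while $t=1$ returns the endpoint $y\in\mathrm{int}(X)$, so $\mathrm{int}(X)$ is convex. The main obstacle is clearly (b): in the absence of an affine structure one cannot simply write $z$ as a convex combination and solve for a nearby point, so the whole argument hinges on recognising $G^q$ as a \emph{global} diffeomorphism and on the explicit, continuously varying preimage $p_0(q)$ with $p_0(y)=x$. This is precisely what upgrades the easy assertion $z\in X$ to the genuine interior statement $z\in\mathrm{int}(X)$; the only point demanding care is the joint continuity of $p_0(\cdot)$, which I would justify by invoking Lemma~\ref{eq:ContExp} and continuity of $\exp$ on $TM$.
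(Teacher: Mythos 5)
Your proof is correct, and while items (a) and (c) match the paper (same sequence argument via Lemma~\ref{eq:ContExp} for the closure, and (c) deduced from (b)), your treatment of the key item (b) is genuinely different. The paper argues by contradiction: assuming $z=\gamma_{x,y}(t)\notin\mathrm{int}(X)$, it picks $z_n\notin X$ near $z$ and $y_n\in X$ near $y$, uses continuity of the geodesic flow $\phi^{t_0}$ (with $t_0$ chosen so that $\pi(\phi^{t_0}(z,-\exp_z^{-1}y))=x$) to land a point $p\in U\subset X$ near $x$, and then convexity forces $z_n\in\gamma_{y_n,p}([0,1])\subset X$, a contradiction. You instead argue directly: the contraction $G^q=\exp_q\circ\bigl((1-t)\,\cdot\bigr)\circ\exp_q^{-1}$ toward $q$ is a global diffeomorphism of $M$ (this is where the Hadamard hypothesis enters), hence an open map, so $G^q(B(x,r))$ is an explicit open neighbourhood of $z$ contained in $X$ once $q\in X$ is taken close enough to $y$ that the continuously varying preimage $p_0(q)$ of $z$ falls in $B(x,r)\subset X$; the verifications $G^q(p_0(q))=z$, $p_0(y)=x$ and the continuity of $p_0$ via Lemma~\ref{eq:ContExp} are all sound. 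Your route is arguably cleaner: it is constructive, avoids both the contradiction and the boundedness/subsequence argument for $(z_n,u_n)$ that the paper needs, and isolates the one geometric fact doing the work (openness of the geodesic contraction). What the paper's version buys is reuse of the geodesic-flow continuity technique, which reappears almost verbatim in the proof of Lemma~\ref{lemmaUpperSem}; but as a standalone proof of the proposition, yours is at least as good.
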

\begin{proof}
Let us start by proving item a). Take $x,y\in \overline{X}$ and $z=\gamma_{x,y}(t)$, for some $t\in [0,1]$. Let $\{x_n\}, \{y_n\}\subset X$ be sequences converging to $x$ and $y$, respectively, and define the sequence $\{z_n\}\subset X$ given by $z_n=\gamma_{x_n,y_n}(t)$ for each $n\in \mathbb{N}$. Since $z_n=\mbox{exp}_{x_n}(t\mbox{exp}^{-1}_{x_n}y_n)$, for each $n\in\mathbb{N}$, the proof of item a) follows from Lemma~\ref{eq:ContExp} combined with the arbitrariness of $x, y$, $z$ and continuity of the application $\exp:TM\to M$. Now, to prove item b), suppose that there exists $0 < t <1$ such that $z = \gamma_{x,y}(t) \notin \mbox{int}(X) $. Let $u = \mbox{exp}_{z}^{-1}y$, observe that there exists $t_0 > 0$ such that $ \pi(\phi^{t_0} (z, -u)) = x$. On the other hand, $x \in \mbox{int}(X)$ so there exists an open set $U \subset X$ with $x \in U$. In particular, $\phi^{t_0} (z, -u)) \in \pi^{-1}(U)$, note that $\pi^{-1}(U)$ is an open set in $TM$. From the continuity of $\phi^{t_0}(\cdot,\cdot)$ there exists an open set $V \subset TM$ with $(z, -u) \in V$ such that $\phi^{t_0}(V) \subset \pi^{-1}(U)$. Take $\{z_n\}, \{y_n\}\subset X$ sequences converging to $z$ and $y$ respectively when $n$ goes to infinity such that $z_n \notin X$ and $y_n \in X$ for each $n$ and defines $u_n = \emph{exp}_{z_n}^{-1}y_n$ for each $n$. Note that $\|u_n\| = d(z_n, y_n) \to d(z,y)$ and, consequently, the sequence $(z_n, u_n)$ is bounded in $TM$. Passing to a subsequence, if necessary, we can suppose that $(z_n, u_n) \to (z,w)$. We claim that $w = u$. Indeed, from the continuity of the exponential application $\mbox{exp}:TM \to M$, where $\mbox{exp}(p,v) = \mbox{exp}_{p}v$, follows that $\mbox{exp}_{z_n} u_n \to \mbox{exp}_{z}w$. On the other hand, $\mbox{exp}_{z_n} u_n = y_n \to y$ and, hence, $\mbox{exp}_{z}w = y = \mbox{exp}_{z}u$. Since $\mbox{exp}_{z}(\cdot)$ is a diffeomorphism, we get $w=u$. Because $(z_n, u_n) \to (z,u)$ follows that $(z_n, -u_n) \to (z,-u)$. Thus, there is $N\in\mathbb{N}$ such that $(z_N, -u_N) \in V$ and, from the above discussion, $\phi^{t_0} (z_N, -u_N) \in \pi^{-1}(U)$. But that tells us that $p=\pi(  \phi^{t_0} (z_N, -u_N)) \in~U$ and, from the convexity of $X$, we have $\gamma_{y_N,p}([0,1])\subset X$. This is absurd considering that $z_N\in \gamma_ {y_N, p}([0.1])$ and item (b) is proved. The item (c) is a consequence of item (b). 
\end{proof}

Given $D\subset M$ convex, a function $f:D\to\mathbb{R}$ is said to be convex (resp. quasiconvex) if $f(\gamma_{x,y}(t))\leq (1-t)f(x)+tf(y)$ (resp. $f(\gamma_{x,y}(t))\leq \max\{f(x),f(y)\}$), for any $x,y\in D$ and $t\in [0,1]$.
If $D$ is also a closed set, for each $x\in M$, it is known that the projection of $x$ onto $D$ is the unique point $\bar{x}\in D$ such that $d(\bar{x},x)=d_D(y,x)$. Besides, if $\gamma_{x,y},\gamma_{x',y'}$ are two geodesics segments connecting $x,y\in M$ and $x', y'\in M$ respectively, then 
$[0,1]\ni t\mapsto d(\gamma_{x,y}(t),\gamma_{x',y'}(t))$ is a convex function. Combining these last two facts, it is not difficult to prove that $d_D(\cdot)$ is a convex function when $D$ is a convex set; see \cite[Lemma 2.5]{ledyaev2006helly}. It is also easy to prove that for each convex function $f_{\tau}:M\to \mathbb{R}$, $\tau\in \mathcal{T}:=\{1,\ldots, m\}$, the function $f(\cdot):=\max_{\tau\in\mathcal{T}}f_\tau(\cdot)$ is convex; See \cite{bento2015} for a more general case where $M$ is replaced by a convex subset of $M$ and the finite discrete set $\mathcal{T}$ is exchanged for a compact set. 
Another important class of convex functions defined over non-compact manifolds is given by Busemann functions that are defined from the distance function and, roughly, measure
the relative distance from points at infinity. Their construction goes as follows:
Let us consider a geodesic ray starting from a given point $\bar{x}$, that is, a path ${\displaystyle \gamma:[0,\infty )\to M}$ such that:
\[
d(\gamma(t),\gamma(s))=|t-s|, \qquad t,s\in [0,+\infty[.
\]
The Busemann's function ${\displaystyle b_{\gamma }:M\to \mathbb {R} }$ is defined by
${\displaystyle b_{\gamma }(y)=\lim _{t\to \infty }{\big (}d{\big (}y,\gamma (t){\big )}-t{\big )}}$. We use the notation $b_{\gamma_{x,z}}(\cdot)$ when it is intended to indicate that the ray $\gamma(\cdot)$ from $x$ passes through $z$.
Since $M\ni y\mapsto d{\big (}y,\gamma (t){\big )}-~t$ is a convex function for each $t$ fixed, it is easy to see that $b_\gamma(\cdot)$ is convex. Besides, as a limit of distance functions, $b_\gamma(\cdot)$ is Lipschitz continuous with Lipschitz constant 1. It is also not difficult to note that for each $x\in M$ fixed, $[0,+\infty[\ni t\mapsto \psi_y(t)= d(y,\gamma (t))-t$ is non-increasing, $\psi_y(\cdot)$ is bounded and, in particular, for $t=d(z,x)$, we have: 
\[
b_{\gamma_{z,x}}(y)\leq d(y,x)-d(z,x). 
\]
For a good discussion and examples of Busemann's functions in some specific Hadamard manifolds see, for example, \cite{bridson2013metric}.  These functions, which were initially introduced by Herbert Busemann to define the parallel axiom on a certain class of metric spaces, see \cite {busemann1955}, have been explored as a tool in important literature for various other purposes; see for example \cite{busemann1993,li1987positive,10.4310/jdg/1214460863,shiohama1979busemann} and references therein. In this present paper the Busemann's function is used to introduce an alternative resolvent associated with equilibrium problems which can be seen as a first step in responding to a problem posed in \cite[Section 5]{cruz2017note}. 

The next lemma is a result not found in the literature that is used for the purposes of the paper related to the well-definedness of the aforementioned resolvent.
\begin{lemma}\label{lemmaUpperSem}
Let $\triangle := \{(x,x): x \in M\}$. Then, $b:M \times (M \backslash \triangle) \times M \to \mathbb{R}$, given by $b(x, z, y)= b_{\gamma_{x,z}}(y)$, is an upper semi-continuous function.
\end{lemma}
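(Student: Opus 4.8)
The plan is to represent $b$ as an infimum of a family of continuous functions and then invoke the standard fact that such an infimum is upper semi-continuous. The starting point is the observation, already recorded in the preamble, that for each fixed $y$ the map $t \mapsto \psi_y(t) = d(y,\gamma(t)) - t$ is non-increasing along any ray $\gamma$. Consequently the limit defining the Busemann function is actually an infimum:
\[
b_{\gamma_{x,z}}(y) = \lim_{t\to\infty}\bigl(d(y,\gamma_{x,z}(t)) - t\bigr) = \inf_{t\geq 0}\bigl(d(y,\gamma_{x,z}(t)) - t\bigr).
\]

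First I would make the ray explicit. Since $\gamma_{x,z}$ is the unit-speed geodesic issuing from $x$ toward $z$, and $x \neq z$ on the domain in question, we may write $\gamma_{x,z}(t) = \exp_x\bigl(t\, u(x,z)\bigr)$ with the unit direction $u(x,z) := \exp_x^{-1}z / \|\exp_x^{-1}z\|$. This yields the representation
\[
b(x,z,y) = \inf_{t\geq 0} g_t(x,z,y), \qquad g_t(x,z,y) := d\bigl(y, \exp_x(t\, u(x,z))\bigr) - t.
\]

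The key step is to verify that each $g_t$ is continuous on $M\times(M\setminus\triangle)\times M$. Continuity of the vector field $u$ follows from Lemma~\ref{eq:ContExp} (joint continuity of $(x,z)\mapsto \exp_x^{-1}z$) together with $\|\exp_x^{-1}z\| = d(x,z) > 0$ off the diagonal, so that the normalization is well defined and continuous. Composing with the continuous exponential map $\exp:TM\to M$ and with the continuous Riemannian distance $d$ then delivers continuity of $g_t$ for each fixed $t$.

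Finally, since $b$ is the pointwise infimum over $t \geq 0$ of the continuous functions $g_t$, it is upper semi-continuous, which is precisely the assertion. The one point that requires care --- and the reason the diagonal $\triangle$ must be excluded from the domain --- is the continuity of the unit direction $u(x,z)$: the normalization by $\|\exp_x^{-1}z\|$ becomes singular exactly when $x=z$, so the representation above and the ensuing continuity argument only make sense away from $\triangle$. I expect this normalization issue near the diagonal to be the main (indeed only) obstacle; everything else reduces to the elementary stability of the infimum-of-continuous-functions construction.
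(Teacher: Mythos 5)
Your proof is correct, and at bottom it runs on the same engine as the paper's: both exploit the fact that the monotonicity of $t\mapsto d(y,\gamma(t))-t$ turns the defining limit into an infimum, so that $b$ is a pointwise infimum of the family $g_t(x,z,y)=d(y,\gamma_{x,z}(t))-t$, and both then need continuity of $(x,z,y)\mapsto g_t(x,z,y)$ for each fixed $t$. The difference is in how this is packaged. The paper unwinds the standard lemma ``an infimum of continuous functions is upper semi-continuous'' by hand: it fixes $\epsilon$, picks $t_0$ nearly attaining the infimum, and builds explicit neighborhoods $U_1,U_2$ on which $\gamma_{x',z'}(t_0)$ stays within $\epsilon/4$ of $\gamma_{x,z}(t_0)$, establishing this via the geodesic flow $\phi^{t_0}$ together with a contradiction-and-subsequence argument. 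You instead invoke the abstract lemma and obtain continuity of each $g_t$ as a straight composition: Lemma~\ref{eq:ContExp} gives joint continuity of $(x,z)\mapsto\exp_x^{-1}z$, the normalization by $d(x,z)$ is harmless off the diagonal, and one then composes with the continuous maps $\exp:TM\to M$ and $d$. Your route is shorter and more modular, it replaces the paper's flow-based neighborhood construction by a one-line continuity check, and it makes explicit why the diagonal must be removed (the paper's proof uses the same normalization $d(x,z)^{-1}\exp_x^{-1}z$ without commenting on it). The paper's longer version buys only self-containedness; there is no gap in your argument.
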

\begin{proof}
Take $x,z,y \in M$, $x \neq z$. Given $\epsilon > 0$ there is $t_0 > 0$ such that $b_{\gamma_{x,z}}(y) \leq d(\gamma_{x,z}(t_0), y) - t_0 < b_{\gamma_{x,z}}(y) + ~\frac{\epsilon}{2}$. Note that $$\pi (\phi^{t_0}(x, d(x,z)^{-1}\mbox{exp}_{x}^{-1}z)) = \gamma_{x,z}(t_0).$$
From the continuity of $\phi^{t_0}(\cdot)$ there exists an open set $V_1 \subset TM$ with 
$$ (x, d(x,z)^{-1}\mbox{exp}_{x}^{-1}z) \in V_1$$ such that $\phi^{t_0}(V_1) \subset \pi^{-1}(B(\gamma_{x,z}(t_0), \frac{\epsilon}{4})$. We claim that there exist two open disjoint sets $U_1$ and $U_2$ such that $x \in U_1$, $z \in U_2$ and $(x', d(x',z')^{-1}\mbox{exp}_{x'}^{-1}z') \in V_1$ for any $x' \in U_1, z'\in U_2$. In fact, if this case does not hold, then there exist sequences $\{z_n\}, \{x_n\}\subset M$ converging to $z$ and $x$, respectively, when $n$ goes to infinity such that $(x_n, d(x_n,z_n)^{-1} \mbox{exp}_{x_n}^{-1}z_n) \notin V_1$ for each $n$. By using an argument analogous to that considered in Proposition~\ref{lemmaBBF2020}, we can conclude that there is $(x_{n_k}, d(x_{n_k},z_{n_k})^{-1} \mbox{exp}_{x_{n_k}}^{-1}z_{n_k}) \to (x,d(x,z)^{-1} \mbox{exp}_{x}^{-1}z)$. But this is an absurd because $V_1$ is open. Now, let us consider $x' \in U_1, z' \in U_2 $, $y' \in B(y, \frac{\epsilon}{4})$ and note that
\begin{equation}\label{ineq.UpperS}
d(\gamma_{x',z'}(t_0), y') - t_0 \leq  d(\gamma_{x',z'}(t_0), \gamma_{x,z}(t_0)) + d(y, y') + d( \gamma_{x,z}(t_0), y)- t_0.
\end{equation}
On the other hand, $(x', d(x',z')^{-1}\mbox{exp}_{x'}^{-1}z') \in V_1$ and, hence, 
$$\gamma_{x',z'}(t_0) = \pi (\phi^{t_0}(x, d(x',z')^{-1}\mbox{exp}_{x'}^{-1}z') \in        B\left(\gamma_{x,z}(t_0), \frac{\epsilon}{4}\right). $$
Therefore, from the inequality in (\ref{ineq.UpperS}) follows that
$$b_{\gamma_{x',z'}}(y)' \leq  d(\gamma_{x',z'}(t_0), y') - t_0 < b_{\gamma_{x,z}}(y) + \epsilon,$$
and the proof is concluded.  
\end{proof}

Given a nonempty and convex set $D\subset M$ and a real valued function $f:~D\to\mathbb{R}$ we denote its epigraph by 
\begin{equation}\label{epi1}
\mbox{epi}(f):=\{(x,\beta)\in D\times\mathbb{R} : f(x) \leq \beta\}.
\end{equation}

Next proposition can be found for example in \cite{ferreira2002,sturm2003probability} in the particular case where $D=M$. However, taking into account that $D$ is a convex set, its proof in which case $D\neq M$ remains a consequence of the fact that $\tilde{\gamma}=(\gamma^1,\gamma^2)$ is a geodesic of $M\times\mathbb{R}$ if and only if $\gamma^1$ is a geodesic of $M$ and $\gamma^2$ is a geodesic of $\mathbb{R}$; see, for example, \cite{udriste1994convex}.
\begin{proposition}\label{epi-convex}
Let $D\subseteq M$ be a nonempty and convex set. Then, a function $f:D\to\mathbb{R}$ is convex if only if $\emph{epi}(f)$ is a convex set. 
\end{proposition}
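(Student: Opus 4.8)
The plan is to prove the equivalence by unwinding the definition of convexity of $f$ and the definition of convexity of a set, and to reduce everything to the geometric fact quoted just before the statement: a curve $\tilde\gamma=(\gamma^1,\gamma^2)$ is a geodesic in the product $M\times\mathbb{R}$ exactly when $\gamma^1$ is a geodesic in $M$ and $\gamma^2$ is a (straight-line) geodesic in $\mathbb{R}$. The only subtlety compared with the case $D=M$ is that $D$ is merely convex, not the whole manifold, so I must check that the geodesics I use actually stay inside $D\times\mathbb{R}$; this is exactly where convexity of $D$ is needed.

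First I would prove the forward implication. Assume $f$ is convex and take two points $(x,\beta),(x',\beta')\in\mathrm{epi}(f)$, so $f(x)\le\beta$ and $f(x')\le\beta'$. The geodesic in $M\times\mathbb{R}$ joining them is $\tilde\gamma(t)=(\gamma_{x,x'}(t),(1-t)\beta+t\beta')$ for $t\in[0,1]$, which is legitimate by the product-geodesic fact. I must show $\tilde\gamma(t)\in\mathrm{epi}(f)$, i.e. that its first coordinate lies in $D$ and that $f(\gamma_{x,x'}(t))\le(1-t)\beta+t\beta'$. The first coordinate stays in $D$ because $D$ is convex and $x,x'\in D$. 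For the epigraph inequality, convexity of $f$ gives
\[
f(\gamma_{x,x'}(t))\le(1-t)f(x)+tf(x')\le(1-t)\beta+t\beta',
\]
so $\tilde\gamma(t)\in\mathrm{epi}(f)$ and $\mathrm{epi}(f)$ is convex.

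For the reverse implication, assume $\mathrm{epi}(f)$ is convex and fix $x,x'\in D$ and $t\in[0,1]$. The points $(x,f(x))$ and $(x',f(x'))$ lie in $\mathrm{epi}(f)$, so the connecting product-geodesic $\tilde\gamma(t)=(\gamma_{x,x'}(t),(1-t)f(x)+tf(x'))$ remains in $\mathrm{epi}(f)$. By the definition of the epigraph, this forces $f(\gamma_{x,x'}(t))\le(1-t)f(x)+tf(x')$, which is precisely convexity of $f$. I expect no serious obstacle here; the one point deserving care—and the reason the statement is phrased for general convex $D$ rather than only $D=M$—is the verification that the first coordinate of each product-geodesic remains in $D$, which is supplied by convexity of $D$, and the observation that the product-geodesic between two points with equal or distinct heights is the obvious affine-in-height lift of $\gamma_{x,x'}$, guaranteed by the quoted characterization of geodesics in $M\times\mathbb{R}$.
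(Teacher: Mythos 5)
Your proof is correct and follows exactly the route the paper indicates: it reduces both implications to the characterization of geodesics in $M\times\mathbb{R}$ as pairs of geodesics, using convexity of $D$ to keep the first coordinate inside $D$. The paper only sketches this (citing the $D=M$ case in the references), so your write-up is simply a fully detailed version of the same argument.
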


\begin{definition}\label{Def ConvexCombination}
Let $D\subset M$ be a nonempty set, $x_i\in D$, $\alpha_i\geq 0$, $i=1,\ldots,N$ such that $\sum_{i=1}^N \alpha_i=1$ and consider a finite sequence $t_1,t_2,\ldots, t_{2N-2}$ given as follows:
\[
t_{2k}=\frac{\alpha_{k+1}}{\sum_{i=1}^{k+1} \alpha_i}\quad \emph{and}\quad t_{2k-1}=\frac{\sum_{i=1}^{k} \alpha_i}{\sum_{i=1}^{k+1} \alpha_i},\quad k\in\mathbb{N},\quad 1\leq k\leq N-1.
\]
Given the sequence $\{y_1,\ldots,y_N\}$ where $y_1=x_1$ and 
$y_k=\gamma_{y_{k-1},x_k}(t_{2(k-1)})$, $k\geq 2$, $y_N$ is the convex combination of elements $x_1,\ldots, x_N$ belonging to $D$ denoted by 
\begin{equation}\label{ConvexCombination1}
\emph{comb}[x_1(\alpha_1),x_2(\alpha_2),\ldots, x_N(\alpha_N)].
\end{equation}
\end{definition}
\begin{remark}\label{remark1}
Note that the last definition can be extended to any ordering of the points $x_1,\ldots, x_N$, which is determined by choosing one of the $N!$ possibilities of permutations of $\{1,\ldots, N\}$. It is easy to see that the definition of $ y_N$ above is non-commutative in the sense that it depends on the chosen permutation. This notion of convex combination, known as ``pseudo-convex combination" was introduced in \cite{zhou2019}. Hence, from \cite[Theorem 3.1]{zhou2019} we can conclude that for $x_1,\ldots, x_N\in D$, then $\mbox{comb}[x_1(\alpha_1),x_2(\alpha_2),\ldots, x_N(\alpha_N)] \in ~D$. 
\end{remark}

The following is a definition of commutative convex combination.
\begin{definition}\label{defConvexCombination}
Let $D\subset M$ be a nonempty set, $x_i\in D$, $\alpha_i\geq 0$, $i=~1,\ldots,N$ such that $\sum_{i=1}^N \alpha_i=1$ and $\mathcal{P}_N$ the set of all permutations of $1,\ldots,N$. For a permutation $a=(j_1,\ldots,j_N)\in \mathcal{P}_N$, let us consider the convex combination of $x_{j_1},\ldots, x_{j_N}$ given by $\mbox{comb}[x_{j_1}(\alpha_{j_1}),x_{j_2}(\alpha_{j_2}),\ldots, x_{j_N}(\alpha_{j_N})]=y(a)$ and the following probability measure 
\begin{equation}\label{medidaprob}
\mu=\frac{1}{N!}\sum_{a\in \mathcal{P}_N}\delta_{y(a)},
\end{equation}
where $\delta_{y(a)}$ denotes for the Dirac measure at the point $y(a)$. The commutative convex combination is given as follows:
\begin{equation}\label{barycenter1}
   b(\mu):=\emph{argmin}_{z\in M}\frac{1}{N!}\sum_{a\in \mathcal{P}_N}d^2(z,y(a)).
\end{equation}
\end{definition}
The problem in (\ref{barycenter1}) is  well defined because $d^2(\cdot,y(a))$ is a $1$-coercive and strictly convex function for each $a\in \mathcal{P}_N$; see, for example, \cite{ferreira2002}. 
It is worth noting that $b(\cdot)$ in (\ref{barycenter1}), known as the Riemannian center of mass or Karcher average due to \cite{grove1973}, has been extensively studied in pure mathematics as well as applied fields, see \cite{grove1974,grove1974jacobi, grove1976,karcher1977,kristaly2008,bini2013,moakher2005,aastrom2017image} and their references therein.
For algorithms used in its computation see, for example, \cite{afsari2013,bacak2014computing, Bento2019}. 

\subsubsection{Jensen's Inequality}
In this section, Jensen's inequality is introduced in the Riemannian context taking into account the convex combinations introduced in Section~\ref{section2}. This important inequality, attributed to the Danish mathematician Johan Jensen due to the paper \cite{jensen1906}, has appeared in the nonlinear setting in the case in which the ``convex combination" of the points involved is the ``center of mass" (barycenter) of the points both in the discrete case and in the continuous case associated with a measure of probability; see for example \cite{jost1994,sturm2003probability,Bacak2014:1} and their references therein.

In the next two results we assume that $D\subset M$ is a non-empty and convex set, and $f:D\to\mathbb{R}$ is a convex function.
\begin{theorem}\label{Jensen1}
For any $N\in\mathbb{N}$, $x_i\in D$ and $\alpha_i\geq 0$, $i=1,\ldots,N$ such that $\sum_{i=1}^N \alpha_i=1$. If $y_N$ is given as in \eqref{ConvexCombination1}, then:
\begin{equation}\label{Jensen1.1}
f(y_N)\leq \sum_{i=1}^N \alpha_if(x_i).
\end{equation}
\end{theorem}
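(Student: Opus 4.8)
The plan is to argue by induction on the number of points $N$, exploiting the recursive structure of the pseudo-convex combination in (\ref{ConvexCombination1}) together with the convexity of $f$ along a single geodesic. The base case $N=1$ is immediate, since $y_1=x_1$ and $\alpha_1=1$ force equality in (\ref{Jensen1.1}). For the inductive step, I would first record the key numerical fact that, because $\sum_{i=1}^N\alpha_i=1$, the last parameter used in the construction is $t_{2(N-1)}=\alpha_N/\sum_{i=1}^N\alpha_i=\alpha_N$, so that $y_N=\gamma_{y_{N-1},x_N}(\alpha_N)$.

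Applying the definition of convexity of $f$ to the geodesic $\gamma_{y_{N-1},x_N}$ at parameter $\alpha_N$ then yields
\[
f(y_N)=f\bigl(\gamma_{y_{N-1},x_N}(\alpha_N)\bigr)\leq (1-\alpha_N)f(y_{N-1})+\alpha_N f(x_N),
\]
where I note that $y_{N-1}\in D$, so the geodesic lies in $D$ and $f$ is defined along it, by Remark~\ref{remark1}. Writing $s_{N-1}:=\sum_{i=1}^{N-1}\alpha_i=1-\alpha_N$, it remains to bound $f(y_{N-1})$ by $\tfrac{1}{s_{N-1}}\sum_{i=1}^{N-1}\alpha_i f(x_i)$, after which the two estimates combine to give exactly $\sum_{i=1}^N\alpha_i f(x_i)$.

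The crux, and what I expect to be the main obstacle, is matching $y_{N-1}$ with an object to which the induction hypothesis applies, since the weights $\alpha_1,\dots,\alpha_{N-1}$ do not sum to $1$. Here I would prove a \emph{rescaling invariance} of the construction: setting $\beta_i:=\alpha_i/s_{N-1}$ for $i\leq N-1$ (so that $\sum_{i=1}^{N-1}\beta_i=1$), the parameter governing the $k$-th step is unchanged, because $\beta_k/\sum_{i=1}^{k}\beta_i=\alpha_k/\sum_{i=1}^{k}\alpha_i$ for every $k\leq N-1$. Consequently the intermediate points built from the $\beta_i$ coincide with $y_1,\dots,y_{N-1}$, so $y_{N-1}$ is precisely the pseudo-convex combination of $x_1,\dots,x_{N-1}$ with the normalized weights $\beta_i$. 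The induction hypothesis then gives $f(y_{N-1})\leq\sum_{i=1}^{N-1}\beta_i f(x_i)=\tfrac{1}{s_{N-1}}\sum_{i=1}^{N-1}\alpha_i f(x_i)$, which closes the induction.

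Finally, a little care is needed with the edge cases created by vanishing weights: if some partial sum $s_k$ equals zero, the defining parameters $\alpha_k/s_k$ are formally undefined. I would dispose of this by assuming without loss of generality that all $\alpha_i>0$ (a point carrying zero weight can be deleted from the list without changing $y_N$), so that every $s_k>0$ and the recursion is well posed; the degenerate case $\alpha_N=1$ (equivalently $s_{N-1}=0$) is handled directly, since then $y_N=\gamma_{y_{N-1},x_N}(1)=x_N$ and the right-hand side of (\ref{Jensen1.1}) equals $f(x_N)$, giving equality.
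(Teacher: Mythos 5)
Your argument is correct, but it is not the route the paper takes. The paper proves the inequality by lifting to the product manifold $M\times\mathbb{R}$: it forms the pseudo-convex combination of the points $(x_i,f(x_i))\in\mbox{epi}(f)$, invokes Proposition~\ref{epi-convex} (convexity of $f$ is equivalent to convexity of $\mbox{epi}(f)$) together with Remark~\ref{remark1} to conclude this combination stays in $\mbox{epi}(f)$, and then uses the splitting of geodesics in $M\times\mathbb{R}$ to identify the lifted combination as $\left(\sum_i\alpha_ix_i,\sum_i\alpha_if(x_i)\right)$, whence the inequality follows from the definition of the epigraph. Your proof instead proceeds by direct induction on $N$, applying the convexity of $f$ along the single geodesic $\gamma_{y_{N-1},x_N}$ at parameter $t_{2(N-1)}=\alpha_N$ and closing the induction with the rescaling identity $\beta_k/\sum_{i=1}^{k}\beta_i=\alpha_k/\sum_{i=1}^{k}\alpha_i$, which shows that $y_{N-1}$ is itself the pseudo-convex combination of $x_1,\dots,x_{N-1}$ with normalized weights. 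Both arguments are sound, and in fact your rescaling lemma is precisely the computation the paper's proof leaves implicit when it asserts that the second (real) coordinate of the product combination reduces to the arithmetic weighted average $\sum_i\alpha_if(x_i)$; your version makes that step explicit and is more elementary, needing neither Proposition~\ref{epi-convex} nor the product-geodesic decomposition. The paper's epigraph argument is shorter once that machinery is in place and transfers verbatim to other notions of convex combination that preserve convex sets (indeed the subsequent theorem for the barycenter uses the analogous Jensen inequality from \cite{Bacak2014:1}). Your treatment of vanishing weights is a welcome addition: the parameters $t_{2k-1},t_{2k}$ are genuinely undefined when a leading partial sum of the $\alpha_i$ vanishes, a degeneracy the paper does not address.
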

\begin{proof}
Take $x_1,\ldots, x_N\in D$. From Remark~\ref{remark1}, we have $y_N\in D$. In particular, using definition of the epigraph of $f$ introduced in (\ref{epi1}), it follows that $(y_N,f(y_N))\in \mbox{epi} f$. On the other hand, using again definition of \mbox{epi}$f$, we have $(x_i,f(x_i))\in \mbox{epi} f$ for $i=1,\ldots, N$. Using convexity of $f$ and applying Proposition~\ref{epi-convex}, we conclude that $\tilde{y}_N=\mbox{comb}[(x_1,f(x_1))(\alpha_1),\ldots, (x_N,f(x_N))(\alpha_N)]$. Now, taking into account that  
$\tilde{\gamma}:[0,1]\to M\times\mathbb{R}$ is a geodesic if, only if, $\tilde\gamma(t)=(\gamma^1(t),\gamma^2(t))$ where $\gamma^1:[0,1]\to M$ and $\gamma^2:[0,1]\to\mathbb{R}$ are geodesics, we have $\tilde{y}_N=\left(y_N,\sum_{i=1}^N\alpha_if(x_i)\right)$ and the desired result it follows from the definition of \mbox{epi}$f$.  
\end{proof}

\begin{theorem}
For any $N\in\mathbb{N}$, $x_i\in D$ and $\alpha_i\geq 0$, $i=1,\ldots,N$ such that $\sum_{i=1}^N \alpha_i=1$. If $b(\cdot)$ is given as in (\ref{barycenter1}), then:
\begin{equation}\label{JensenIneq}
f(b(\mu))\leq \sum_{i=1}^N \alpha_if(x_i).
\end{equation}
\end{theorem}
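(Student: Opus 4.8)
The plan is to reduce the statement to two ingredients: the per-permutation estimate already furnished by Theorem~\ref{Jensen1}, and the classical Jensen inequality for the Riemannian center of mass (the barycentric Jensen inequality) for convex lower semicontinuous functions on Hadamard manifolds, which is available in the cited literature \cite{jost1994,sturm2003probability,Bacak2014:1}. The point of the present statement is precisely to feed the specific measure $\mu$ built from the pseudo-convex combinations $y(a)$ into that known inequality and obtain the clean bound on the right-hand side of (\ref{JensenIneq}).

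First I would fix a permutation $a=(j_1,\ldots,j_N)\in\mathcal{P}_N$. By construction $y(a)$ is the pseudo-convex combination of $x_{j_1},\ldots,x_{j_N}$ with weights $\alpha_{j_1},\ldots,\alpha_{j_N}$, so Theorem~\ref{Jensen1} gives
\[
f(y(a))\leq \sum_{i=1}^N \alpha_{j_i} f(x_{j_i}).
\]
Since $a$ merely reorders the indices and addition in $\mathbb{R}$ is commutative, the right-hand side equals $\sum_{k=1}^N \alpha_k f(x_k)$; hence $f(y(a))\leq \sum_{k=1}^N \alpha_k f(x_k)$ for \emph{every} $a\in\mathcal{P}_N$. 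Averaging over the $N!$ permutations then yields
\[
\frac{1}{N!}\sum_{a\in\mathcal{P}_N} f(y(a))\leq \sum_{k=1}^N \alpha_k f(x_k).
\]
Next I would invoke the barycentric Jensen inequality for the discrete probability measure $\mu=\frac{1}{N!}\sum_{a}\delta_{y(a)}$ from (\ref{medidaprob}), whose barycenter is $b(\mu)$ defined in (\ref{barycenter1}). This gives $f(b(\mu))\leq \int_M f\,d\mu=\frac{1}{N!}\sum_{a\in\mathcal{P}_N} f(y(a))$, and chaining this with the previous display produces (\ref{JensenIneq}).

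The routine part is the reordering argument, settled by commutativity of a finite real sum. The delicate point, which I expect to be the main obstacle, is guaranteeing that $f(b(\mu))$ is meaningful and that the barycentric inequality is being applied over $D$ rather than over all of $M$: each $y(a)$ lies in $D$ by Remark~\ref{remark1}, and the barycenter of a finitely supported measure lies in the closed convex hull of its support, hence in $\overline{D}$ (convex by Proposition~\ref{lemmaBBF2020} applied to $D$); so one must either take $D$ closed or extend $f$ to $\overline{D}$ preserving convexity and lower semicontinuity, so that both the cited Jensen inequality and the value $f(b(\mu))$ are legitimate. Should a self-contained derivation of $f(b(\mu))\leq \frac{1}{N!}\sum_{a} f(y(a))$ be preferred over citing it, I would start from the first-order optimality condition $\sum_{a}\exp_{b(\mu)}^{-1}y(a)=0$ for the strictly convex, $1$-coercive objective in (\ref{barycenter1}) and combine it with the epigraph characterization of convexity in Proposition~\ref{epi-convex}, mirroring the barycentric proof of Jensen's inequality in the references.
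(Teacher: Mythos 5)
Your proof is correct and follows essentially the same route as the paper's: both combine the per-permutation estimate of Theorem~\ref{Jensen1} with the barycentric Jensen inequality of \cite[Proposition 2.3.8]{Bacak2014:1} applied to the measure $\mu$ from (\ref{medidaprob}). Your explicit observation that the reordered sum equals $\sum_{k=1}^N\alpha_k f(x_k)$ and your caveat that $b(\mu)$ is a priori only known to lie in $\overline{D}$ (so $f$ must be evaluable there) are points the paper leaves implicit, but they do not change the argument.
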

\begin{proof}
From \cite[Proposition 2.3.8]{Bacak2014:1} combined with the probability measure in (\ref{medidaprob}), we have 
\begin{equation}\label{JensenBacak}
f(b(\mu))\leq \frac{1}{N!}\sum_{a\in \mathcal{P}_N}f(y(a)),
\end{equation}
where $``a"$ and $``y(a)"$ are given in Definition~\ref{defConvexCombination}. On the other hand, taking into account Remark~\ref{remark1}, we can apply Theorem~\ref{Jensen1} to the convex combination obtained through permutation $a=(j_1,\ldots,j_N)\in \mathcal{P}_N$ obtaining the following variant of (\ref{Jensen1.1}):
\begin{equation}\label{Jensen2.2}
f\left(y(a)\right)\leq \sum_{i=1}^N \alpha_{j_i}f(x_{j_i}).
\end{equation}
Therefore, the inequality in (\ref{JensenIneq}) follows immediately by combining (\ref{JensenBacak}) and (\ref{Jensen2.2}), which concludes the proof of the theorem.  
\end{proof}

\subsubsection{Carathédory's theorem}
One of the pillars of combinatorial convexity is the so-called Carathéodory's theorem introduced by Constantin Carathéodory in the linear setting in \cite{caratheodory1907variabilitatsbereich} but, as noted in \cite[Remark~3.2]{zhou2019}, its validity is not known even in Hadamard manifolds. The absence of a Carathéodory theorem makes us face some  significant obstacle as, for example, ``what is the convex hull of three points in a 3 or higher dimensional Riemannian manifold?" This issue was highlighted by Berger in \cite[page 253]{berger2012panoramic}, who also conjectured that said convex hull is not closed, except in very special cases. As the conjecture is placed in the case where $M$ is 3 or higher dimensional Riemannian manifold, we see  the case two-dimensional is a folklore result. We present below a brief discussion involving the necessary elements for an induction proof of the Carathéodory's theorem where we indicate some specific steps by way of illustration only. Let $M$ be a two-dimensional Hadamard manifold, i.e., n = 2. Given $y\in M$ and $v\in T_yM$ we introduce the following notion of Riemanniann semi-space:
\[
S_{v}:=\left\{x\in M:\langle v,\exp^{-1}_yx\rangle\leq0\right\}.
\]
It is clear that $S_v$ is closed and, from \cite{ferreira2005} it is also convex in the particular case where $M$ has  constant curvature (as noted in \cite{batista2020}, so far it is not known if $S_v$ is or not convex in general Hadamard manifolds). However, it is not difficult to prove that in the case $n=2$ the convexity follows even for non-constant curvature and also that the geodesic $\gamma_{v^\perp}(\cdot,y)$  divides $M$ into two convex parts represented by $S_v$ and $S_{-v}$. Given $\mathcal{B}:=\{x_1,\ldots,x_m\}\subset M$, $m\in\mathbb{N}$, then any $x\in \mbox{co}(\mathcal{B})$ can be written in terms of no more than $n+1$ points. Note that in the case where $m = 1$ or $2$ there is nothing to do. Just as an illustration to clarify the procedure, let us build the cases:
\begin{itemize}
    \item [a)] $m=3$;
    \item [b)] $m=4$.
\end{itemize}
Assume that happens $a)$, $ x_1, x_2, x_3 $ are non-collinear points (otherwise we would be in the case $m = 2$ already considered) and let us consider the triangle $\Delta_{x_1, x_2, x_3}$ given by the intersection of the semi-spaces $S_{v_1},S_{v_2},S_{v_3}$ where $v_1=(\mbox{exp}^{-1}_{x_1}x_2)^\perp$, $v_2=(\mbox{exp}^{-1}_{x_2}x_3)^\perp$ and $v_3=(\mbox{exp}^{-1}_{x_3}x_1)^\perp$. From the definition of $\mbox{co}(\mathcal{B})$ and taking into account that $\Delta_{x_1, x_2, x_3}$ is a convex set, we have $\mbox{co}(\mathcal{B})\subset \Delta_{x_1, x_2, x_3}$. Thus, for a given $x\in \mbox{co}(\mathcal{B})$, either $x\in \partial (\Delta_{x_1, x_2, x_3})$ (border of $\Delta_{x_1, x_2, x_3}$) or $x\in \mbox{int}(\Delta_{x_1, x_2, x_3})$ (interior of $\Delta_{x_1, x_2, x_3}$). On the one hand, if $x\in \partial (\Delta_{x_1, x_2, x_3})$, $x=\mbox{exp}_{x_i}tx_j$ for some $i,j\in\{1,2,3\}$ and $t\in[0,1]$. On the other hand, if $x\in \mbox{int}(\Delta_{x_1, x_2, x_3})$ then $x=\mbox{exp}_{x_1}t_1(\mbox{exp}^{-1}_{x_1}(\mbox{exp}_{x_2}t_2(\mbox{exp}^{-1}_{x_2}x_3)$ for $t_1,t_2\in [0,1]$ ensuring that the result is true for $m = 3$; see characterization of the convex hull presented in Section~\ref{Sec2.2}. Let us suppose now that happens $b)$ and assume that any three points are non-collinear. Without loss of generality let us consider the cases where either $x_4\in 
\Delta_{x_1, x_2, x_3}$ or $x_4\notin \Delta_{x_1, x_2, x_3}$. If the first case happens, then using arguments similar to the previous one, it is possible to conclude that $\mbox{co}(\mathcal{B})=\mbox{co}(\{x_1,x_2,x_3\})$ and the result follows. Otherwise, if the second case happens, we can again without loss of generality assume that $x_4\in S_{-v_1}$. That being the case, $\mathcal{B}$ determines the following convex and closed set $\cap_{i=1}^4S_{v_i}$, where  $v_1=(\mbox{exp}^{-1}_{x_1}x_4)^\perp$, $v_2=(\mbox{exp}^{-1}_{x_4}x_2)^\perp$, $v_3=(\mbox{exp}^{-1}_{x_2}x_2)^\perp$, 
$v_4=(\mbox{exp}^{-1}_{x_3}x_1)^\perp$ and, consequently, $\mbox{co}(\mathcal{B})\subset \cap_{i=1}^4S_{v_i}$. The geodesic $\gamma_{v}(\cdot,x_1)$, where $v=\mbox{exp}^{-1}_{x_1}x_2$, divides the quadrilateral $\cap_{i=1}^4S_{v_i}$ into two closed and convex triangles, namely, $\Delta_{x_1, x_2, x_4}\subset S^{x_1}_{v^\perp}$ and $\Delta_{x_1, x_2, x_3}\subset S^{x_1}_{-v^\perp}$. But that tells us that any
$x\in \mbox{co}(\mathcal{B})$ can be written in one of the following ways: either $x=\mbox{exp}_{x_i}tx_j$ for some $i,j\in\{1,2,3,4\}$ and $t\in[0,1]$ (this is the case when $x$ belongs to one of the sets $\partial (\Delta_{x_1, x_2, x_4})$, $\partial (\Delta_{x_1, x_2, x_3})$  or $\gamma_{x_1,x_2}([0,1])$) or $x=\mbox{exp}_{x_1}t_1(\mbox{exp}^{-1}_{x_1}(\mbox{exp}_{x_2}t_2(\mbox{exp}^{-1}_{x_2}x_3)$ (resp. $x\in \mbox{exp}_{x_1}t_1(\mbox{exp}^{-1}_{x_1}(\mbox{exp}_{x_2}t_2(\mbox{exp}^{-1}_{x_2}x_4)$) for $t_1,t_2\in [0,1]$ if $x\in \mbox{int}(\Delta_{x_1, x_2, x_3})$ (resp.  $x\in \mbox{int}(\Delta_{x_1, x_2, x_4}))$ ensuring that the result is true for $m=4$. 

\section{Helly's theorem and KKM lemma}\label{section3}
In this section, our main focus is explore Helly’s theorem from theoretical viewpoint and obtain an alternative version for Knaster-Kuratowski-Mazurkiewicz theorem, also known as the KKM lemma, suitable for our purposes in the next section dealing with existence result of solutions for equilibrium problems.

\subsection{Helly's theorem}
 The Helly's theorem, introduced by Edward Helly in the linear setting (see \cite{helly1923}), is an important result from convex geometry which gives sufficient conditions for a family of convex sets to have a nonempty intersection. Over the years a large variety of proofs as well as applications have been presented; see, for example, \cite{eggleston1958,danzer1963helly,lay2007} where its relation with another important classic results of convex geometry can also be found.

As far as we know, the first approach to Helly's theorem in the Riemannian context was presented in \cite{ledyaev2006helly} where, specifically, the authors generalize the classical Helly’s theorem about the intersection of convex sets in $\mathbb{R}^n$ for the case of manifolds of nonpositive curvature (for example, Hadamard manifolds). The main result in \cite{ledyaev2006helly}, identified in the referred paper as Theorem~4.2, is described below:

\begin{theorem}\label{mainLedyaev}
Let $M$ be an $n-$dimensional $C^{\infty}$ Riemannian manifold with nonpositive curvature and let $K$ be an open convex subset of $M$ satisfying  the $(CC)-$condition. Let $\{C_a\}_{a\in A}$ be a family of closed convex subsets of $K$ and let at least one of them be compact. Suppose that, for any $n+1$ elements $a_1,\ldots, a_{n+1}\in A$,
\[
\cap_{j=1}^{n+1}C_{a_j}\neq\emptyset.
\]
Then, $\cap_{a\in A}C_{a}\neq\emptyset.$
\end{theorem}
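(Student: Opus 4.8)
The plan is to prove the statement in two stages. First I would reduce the arbitrary family to its finite subfamilies by a compactness argument, and then settle the finite case by a \emph{variational} argument in which the only appeal to Carath\'eodory's theorem is made inside a tangent space, where it is the classical Euclidean statement and hence perfectly available. For the reduction, let $C_{a_0}$ be the member assumed to be compact and consider the family $\{C_{a_0}\cap C_a\}_{a\in A}$ of closed subsets of the compact set $C_{a_0}$. If every finite subfamily containing $C_{a_0}$ has nonempty intersection, these closed sets satisfy the finite intersection property inside $C_{a_0}$, whence $\cap_{a\in A}C_a=\cap_{a\in A}(C_{a_0}\cap C_a)\neq\emptyset$. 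Thus it suffices to prove that for closed convex sets $C_1,\dots,C_m\subset K$ with $C_1$ compact and with every $n+1$ of them meeting, one has $\cap_{i=1}^m C_i\neq\emptyset$.

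For the finite case I would exploit that each $d_{C_i}(\cdot)$ is convex (as recorded in Section~\ref{Sec2.2}) and that a maximum of convex functions is convex, setting $f:=\max_{1\le i\le m}d_{C_i}$. Since $C_1$ is compact, $d_{C_1}$ is coercive, so $f$ is a coercive continuous convex function whose sublevel sets are closed and bounded, hence compact; therefore $f$ attains its minimum at some $\bar x\in M$. I claim $f(\bar x)=0$, which yields $\bar x\in\cap_{i=1}^m C_i$. Suppose instead $r:=f(\bar x)>0$ and let $I=\{i:d_{C_i}(\bar x)=r\}$ be the active indices. For $i\in I$ we have $\bar x\notin C_i$, so $d_{C_i}$ is differentiable at $\bar x$ with unit gradient $v_i:=-\mbox{exp}^{-1}_{\bar x}(\pi_{C_i}(\bar x))/r$ pointing away from the projection. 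The optimality condition $0\in\partial f(\bar x)$ at the minimizer, combined with the subdifferential rule for a maximum of convex functions, gives $0=\sum_{i\in I}\lambda_i v_i$ with $\lambda_i\ge 0$ and $\sum_{i\in I}\lambda_i=1$.

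Here is the key point that circumvents the unavailability of Carath\'eodory on $M$: the vectors $v_i$ lie in $T_{\bar x}M$, which is linearly isometric to $\mathbb{R}^n$, so the classical \emph{Euclidean} Carath\'eodory theorem applies there and selects $I'\subset I$ with $|I'|\le n+1$ and $0=\sum_{i\in I'}\mu_i v_i$, $\mu_i\ge 0$, $\sum_{i\in I'}\mu_i=1$. Because $|I'|\le n+1$, the hypothesis furnishes a point $q\in\cap_{i\in I'}C_i$. For each $i\in I'$ the convexity of $d_{C_i}$ yields the subgradient inequality $0=d_{C_i}(q)\ge d_{C_i}(\bar x)+\langle v_i,\mbox{exp}^{-1}_{\bar x}q\rangle=r+\langle v_i,\mbox{exp}^{-1}_{\bar x}q\rangle$, so $\langle v_i,\mbox{exp}^{-1}_{\bar x}q\rangle\le -r$. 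Weighting by $\mu_i$ and summing, the left-hand side collapses to $\langle 0,\mbox{exp}^{-1}_{\bar x}q\rangle=0$, while the right-hand side is $\le -r<0$, a contradiction. Hence $f(\bar x)=0$ and the finite case, and with it the theorem, follows.

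I expect the technical heart to be the two regularity facts invoked at the minimizer: the $C^1$-differentiability of $d_{C_i}$ away from the convex set together with the explicit formula for its gradient, and the Riemannian max-rule for subdifferentials that produces $0\in\mbox{co}\{v_i:i\in I\}$. Both are standard for convex functions on Hadamard manifolds but must be stated with care (in particular, that the projection $\pi_{C_i}$ is single-valued and that the subgradient inequality holds in the form above). The conceptual novelty, and the reason the argument goes through despite the absence of a Carath\'eodory theorem on $M$, is that Carath\'eodory is used only in the Euclidean tangent space and never to describe convex hulls in $M$ itself.
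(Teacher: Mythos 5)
Your argument is correct (in the Hadamard setting) but follows a genuinely different route from the paper. The paper does not reprove Theorem~\ref{mainLedyaev} from first principles: it imports it from \cite{ledyaev2006helly} and instead establishes the variant Theorem~\ref{Hellymain1} (no $(CC)$-condition, one compact member) by a pure reduction --- intersect every set with a large closed ball, which in a Hadamard manifold is convex and compact, and then invoke the quoted Theorem~\ref{main1Ledyaev} twice, once for finite families and once, via the auxiliary sets $A_\beta$, for infinite ones. Your proof is instead self-contained: the finite-intersection-property reduction plus the variational argument (minimize $f=\max_i d_{C_i}$, extract $0\in\mbox{co}\{v_i : i\in I\}$ from optimality, apply \emph{Euclidean} Carath\'eodory in $T_{\bar x}M$ to cut the active set to at most $n+1$ indices, and contradict via the first-order inequality) in effect reproves the black box Theorem~\ref{main1Ledyaev} as well, and since it never uses the $(CC)$-condition it actually delivers the stronger Theorem~\ref{Hellymain1} directly; this is close in spirit to the original argument of \cite{ledyaev2006helly}, whereas the paper's reduction is shorter and isolates the single geometric fact (convexity and compactness of closed balls) needed to dispense with $(CC)$. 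Two caveats you should make explicit. First, your proof is pinned to Hadamard manifolds: global invertibility of $\exp_{\bar x}$, single-valuedness of the projection, the gradient formula for $d_{C_i}$ off $C_i$, and the inequality $d_{C_i}(q)\ge d_{C_i}(\bar x)+\langle v_i,\exp^{-1}_{\bar x}q\rangle$ all use it, while Theorem~\ref{mainLedyaev} as stated allows a non--simply-connected nonpositively curved $M$ with convexity localized to $K$; this restriction matches the scope of the paper but is a genuine narrowing of the stated theorem. Second, the max-rule step is safest phrased through one-sided directional derivatives: at the minimizer $\max_{i\in I}\langle v_i,w\rangle\ge 0$ for every $w\in T_{\bar x}M$, which by separation in $T_{\bar x}M$ is exactly $0\in\mbox{co}\{v_i : i\in I\}$; this avoids appealing to a general subdifferential calculus on manifolds, and the final contradiction can likewise be run with the convexity estimate $d_{C_i}(\gamma_{\bar x,q}(t))\le (1-t)r$ in place of a smooth subgradient inequality.
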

Note that the above result is obtained in the case where $K$ satisfies a certain $(CC)-$condition which ensures that the convex hull of any finite set of points in $K$ is a compact set. 
\begin{remark}\label{remarkconvexball}
If $M$ is a Hadamard manifold, for a given point $\bar{p}\in M$ and $0<r<+\infty$, a closed ball $\overline{B(\bar{p},r)}:=\{p: d(p,\bar{p})\leq r\}$ is a convex and compact set. In this context, it follows that the convex hull of a given compact set $D$ is necessarily non-empty and bounded.
However, taking into account the approach in the linear setting and considering the discussion with an emphasis on Berger's conjecture addressed in Section 2.3, we highlight the following difficulties to guarantee the $ CC-$condition as defined in \cite{ledyaev2006helly}:
\begin{itemize}
\item [a)] how to ensure that each element of \mbox{co}$(D)$ is in fact expressed as a ``convex combination" of points belonging to D? As mentioned in \cite[Remark~3.3]{zhou2019}, this does not necessarily happen in general;
\item [b)] as far as we know, in the particular case where $M=\mathbb{R}^n$ to show that \mbox{co}$(D)$ is closed it was necessary to use that each element of \mbox{co}$(D)$ can be written as a convex combination of no more than $\mbox{dim}(M)+1$ points from set $D$. This is the content of the so-called Carathéodory's theorem that, as noted in \cite[Remark~3.2]{zhou2019}, its validity is not known even in Hadamard manifolds.
\end{itemize}
\end{remark}
Next, we present an alternative proof for Theorem~\ref{mainLedyaev} in the case where $M$ is an $n$-dimensional Hadamard manifold without admitting the $CC-$condition as a assumption. 
\begin{theorem}\label{Hellymain1}
Let $\mathcal{A}$ be a set of indices, $\{B_a\}_{a\in \mathcal{A}}$ a family of closed and convex sets and assume that there exists $a'\in \mathcal{A}$ such that $B_{a'}$ is compact. If intersection of any $n+1$ sets of the family $\{B_a\}_{a\in\mathcal{A}}$,
\[
\cap_{j=1}^{n+1}B_{a_j}\neq\emptyset,
\]
then $\cap_{a\in \mathcal{A}}B_{a}\neq\emptyset.$
\end{theorem}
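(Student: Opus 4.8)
The plan is to deduce Theorem~\ref{Hellymain1} from Theorem~\ref{main1Ledyaev} by manufacturing, out of the single compact member $B_{a'}$, the ambient compact convex set required there. First I would set $K=M$, which is trivially open and convex, and take $C:=B_{a'}$, which is compact and convex by hypothesis and satisfies $C\subseteq K$. For every $a\in\mathcal{A}$ put $\tilde B_a:=B_a\cap B_{a'}$; each $\tilde B_a$ is convex (as an intersection of convex sets) and closed, hence compact as a closed subset of the compact set $B_{a'}$, and $\tilde B_a\subseteq C$. Since $\bigcap_{a\in\mathcal{A}}\tilde B_a=\bigcap_{a\in\mathcal{A}}B_a$, it suffices to prove that $\{\tilde B_a\}_{a\in\mathcal{A}}$ has nonempty intersection, and for this I would invoke Theorem~\ref{main1Ledyaev}. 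Note that this route handles an arbitrary index set $\mathcal{A}$ at once, so no separate compactness or finite-intersection argument for the infinite case is needed.

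The content of the proof is then to verify the hypothesis of Theorem~\ref{main1Ledyaev}: every $n+1$ of the sets $\tilde B_a$ meet. Fix indices $a_1,\dots,a_{n+1}$. If $a'\in\{a_1,\dots,a_{n+1}\}$, then $\bigcap_{j=1}^{n+1}\tilde B_{a_j}=\bigcap_{j=1}^{n+1}B_{a_j}$ is nonempty directly from the hypothesis of Theorem~\ref{Hellymain1}. The remaining case, in which $a'$ is distinct from all the $a_j$, is where the difficulty concentrates: there one has $\bigcap_{j=1}^{n+1}\tilde B_{a_j}=B_{a'}\cap B_{a_1}\cap\cdots\cap B_{a_{n+1}}$, an intersection of $n+2$ of the original sets, whereas the hypothesis only guarantees that intersections of $n+1$ of them are nonempty.

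Thus everything reduces to the following key statement (the $n+2$ case): if $B_0,\dots,B_{n+1}$ are closed convex sets with $B_0$ compact and every $n+1$ of them meet, then all $n+2$ meet. I expect this to be the main obstacle, since in the Euclidean setting it is exactly the step supplied by Radon's (equivalently Carathéodory's) theorem, whose analogue is unavailable here; this is precisely the gap flagged in Remark~\ref{remarkconvexball} and the reason the $(CC)$-condition appears in Theorem~\ref{mainLedyaev}. The naive attempt to feed these $n+2$ sets back into Theorem~\ref{main1Ledyaev}, again compactifying by $B_0$, is circular: among the resulting $(n+1)$-subfamilies, the one that omits $B_0$ is the desired common intersection $B_0\cap B_1\cap\cdots\cap B_{n+1}$ itself.

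To break this circularity I would work on the compact set $B_0$ directly and study the convex function $x\mapsto\max_{1\le i\le n+1}d_{B_i}(x)$, which attains its minimum over $B_0$ by compactness; the goal becomes showing that this minimum is $0$, for then the minimizer lies in every $B_i$ and in $B_0$. The witnesses $p_k\in\bigcap_{i\neq k}B_i$ furnished by the $(n+1)$-condition, with $p_k\in B_0$ for $k\ge 1$ and $p_0\in B_1\cap\cdots\cap B_{n+1}$, should then be combined with the convexity of the distance functions $d_{B_i}$ and of $B_0$ (Proposition~\ref{lemmaBBF2020}) to rule out a positive minimum. Making this last variational argument airtight, genuinely replacing Radon's theorem in the nonlinear setting, is the crux of the whole result.
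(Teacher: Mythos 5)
Your reduction of the general statement to the finite case $m=n+2$ is sound, and your treatment of an arbitrary index set via $\tilde B_a:=B_a\cap B_{a'}$ is in the same spirit as (and arguably tidier than) the paper's case (b), which instead groups the sets into blocks $A_\beta=B_{a'}\cap B_{a_1}\cap\dots\cap B_{a_n}$ inside the compact set $B_{a'}$. But the proof is not complete: the step you yourself call the crux --- that $n+2$ closed convex sets, one of them compact, with all $(n+1)$-wise intersections nonempty, have a common point --- is only sketched, and the variational argument with $x\mapsto\max_i d_{B_i}(x)$ is never carried out. As it stands this is a genuine gap, not a routine verification; in the Euclidean case this is exactly the inductive step of Helly's theorem, which requires Radon's theorem or an equivalent substitute.

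The missing idea is that this step does \emph{not} need a new argument: it already follows from Theorem~\ref{main1Ledyaev}, provided you compactify with a large closed ball rather than with $B_0$. Given finitely many sets $B_0,\dots,B_{n+1}$ (or, as in the paper's case (a), any finite subfamily), choose $\bar p$ and $r$ large enough that $\overline{B(\bar p,r)}$ meets each of the finitely many nonempty $(n+1)$-wise intersections $\bigcap_{j\neq k}B_j$; in a Hadamard manifold this ball is convex and compact (Remark~\ref{remarkconvexball}). Setting $C_j:=B_j\cap\overline{B(\bar p,r)}$, every $(n+1)$-subfamily of $\{C_j\}$ omits exactly one index $k$ and its intersection is $\bigl(\bigcap_{j\neq k}B_j\bigr)\cap\overline{B(\bar p,r)}\neq\emptyset$ --- an $(n+1)$-wise, not $(n+2)$-wise, intersection of the original sets. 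The circularity you diagnosed arises only because you insisted on using $B_0$ itself as the ambient compact set $C$, which puts the target intersection among the hypotheses; with the ball as $C$ (and $K=M$), Theorem~\ref{main1Ledyaev} applies directly and yields $\bigcap_j B_j\neq\emptyset$. With that finite case in hand, your reduction for the infinite family goes through exactly as you describe.
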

\begin{proof}
The proof is divided into three cases, namely, when
\begin{enumerate}
\item [a)] $\mathcal{A}=\{1,\ldots,m\},\quad m>n+1$;
\item [b)] $\mathcal{A}$ is a set of infinite indices.
\end{enumerate}
Let us start with item a). Given $a_1, \ldots, a_{n+1}\in\mathcal{A}$, by hypothesis we can take $p_{a_1\ldots a_{n+1}}\in \cap_{j=1}^{n+1}B_{a_j}\neq\emptyset$. Because $\mathcal{A}$ has a finite number of elements, it is clear that we can choose $\bar{p}\in M$ and $0<r<+\infty$ such that  
$p_{a_1\ldots a_{n+1}}\in \overline{B(\bar{p},r)}$, for any $a_1,\ldots a_{n+1}\in \mathcal{A}$.
From what has already been noted in  Remark~\ref{remarkconvexball}, we can conclude that $\overline{B(\bar{p},r)}$ is a convex and compact set. Let us define $\tilde{C}_a:=B_a\cap \overline{B(\bar{p},r)}$, $a\in \mathcal{A}$. Therefore, taking into account that for any $n+1$ sets $B_a$ one have  $\cap_{j=1}^{n+1}B_{a_j}\neq\emptyset$, the desired resulted follows directly by applying \cite[Theorem 4.1]{ledyaev2006helly} with $K=M$ (remember that Hadamard manifolds are convex), $C=\overline{B(\bar{p},r)}$, $C_a=\tilde{C}_a$, $a\in \mathcal{A}=\{1,\ldots, m\}$.
For proving the item b), let us define:
\begin{eqnarray*}
J&:=&J_1\times J_2\times\ldots \times J_n,\qquad J_1=J_2=\ldots=J_n:=\mathcal{A}\setminus\{a'\},\\
J^*&:=&\{\beta=(a_1,\ldots, a_n)\in J~:~ a_i\neq a_j, i\neq j\},\\
A_\beta &:=& B_{a'}\cap_{j=1}^nB_{a_j},\quad \beta\in J^*.
\end{eqnarray*}
Taking into account that $A_\beta$ is the intersection of $n+1$ sets of the family $\{B_a\}_{a\in\mathcal{A}}$, it follows immediately from the hypothesis that $A_\beta\neq\emptyset$ for each $\beta\in J^*$. Note that, for any $A_{\beta_1},\ldots, A_{\beta_{n+1}}$, from item a) it follows that  $\cap_{j=1}^{n+1}A_{\beta_j}\neq\emptyset$.
Therefore, the conclusion of the proof goes on by applying \cite[Theorem 4.1]{ledyaev2006helly} with $K=M$, $C=B_{a'}$, $C_a=A_\beta$ and by considering that  $\cap_{a\in \mathcal{A}}B_{a}=\cap_{\beta\in J^*}A_{\beta}$.    
\end{proof}

\subsection{KKM lemma}
The KKM lemma is associated with fixed point theory and was published in 1929 by the three Polish mathematicians Knaste, Kuratowski, Mazurkiewicz; see \cite{Knaster1929}. A brief discussion of this important result of convex geometry was presented in the introduction to the paper. In this section, we use Helly's theorem to obtain the following version of the KKM lemma:
\begin{theorem}\label{TheoremKKM}
Let $K\subset M$ and $G:K\to 2^K$ a mapping such that, for each $x\in K$, $G(x)$ is closed and convex set. Suppose that 
\begin{enumerate}
    \item [(a)] there exists $x_0\in K$ such that $G(x_0)$ is compact;
    \item [(b)] for any $x_1,\ldots,x_{n+1}\in K$, \emph{co}$(\{x_1,\ldots,x_{n+1}\})\subset \cup_{i=1}^{n+1}G(x_i)$. 
\end{enumerate}
Then, $\cap_{x\in K}G(x)\neq\emptyset$.
\end{theorem}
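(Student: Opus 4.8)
The plan is to derive the conclusion from a single application of Helly's theorem (Theorem~\ref{Hellymain1}) to the family $\{G(x)\}_{x\in K}$, after reducing everything to its finite ``base case''. By hypothesis each $G(x)$ is closed and convex and, by~(a), $G(x_0)$ is compact, so the family already satisfies every hypothesis of Theorem~\ref{Hellymain1} except the requirement that any $n+1$ of its members meet. Hence it suffices to prove
\[
\bigcap_{i=1}^{n+1}G(x_i)\neq\emptyset\qquad\text{for all } x_1,\ldots,x_{n+1}\in K,
\]
after which Theorem~\ref{Hellymain1} yields $\bigcap_{x\in K}G(x)\neq\emptyset$ at once. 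This first step is exactly the content of ``obtaining KKM from Helly'': the infinite intersection is handled entirely by the Riemannian Helly theorem, and only the order-$(n+1)$ statement carries combinatorial information.

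To establish the base case I would fix $x_1,\ldots,x_{n+1}\in K$ and transport the problem to the standard Euclidean simplex $\sigma_n:=\{\lambda\in\mathbb{R}^{n+1}:\lambda_i\geq 0,\ \sum_{i}\lambda_i=1\}$. First, since~(b) is assumed for \emph{every} $(n+1)$-tuple and repetitions of vertices are permitted, padding a subcollection with repeated points upgrades~(b) to the full covering condition
\[
\mbox{co}(\{x_i:i\in I\})\subset\bigcup_{i\in I}G(x_i)\qquad\text{for every } I\subseteq\{1,\ldots,n+1\};
\]
in particular $x_i\in G(x_i)$. Next, fixing the natural ordering, the iterated geodesic interpolation in~(\ref{ConvexCombination1}) defines a map $\phi\colon\sigma_n\to M$, $\phi(\lambda):=\sum_{i=1}^{n+1}\lambda_ix_i$, which is continuous by Lemma~\ref{eq:ContExp} and whose image lies in $\mbox{co}(\{x_1,\ldots,x_{n+1}\})$ by Remark~\ref{remark1}. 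Setting $F_i:=\phi^{-1}(G(x_i))$, each $F_i$ is closed; moreover, when $\lambda$ lies in the face $\{\lambda_j=0:j\notin I\}$, the vanishing weights are simply skipped in the construction, so $\phi(\lambda)\in\mbox{co}(\{x_i:i\in I\})\subset\bigcup_{i\in I}G(x_i)$ and therefore $\lambda\in\bigcup_{i\in I}F_i$. Thus $\{F_i\}$ is a closed KKM cover of $\sigma_n$, and the classical (Euclidean) KKM lemma produces $\bar\lambda\in\bigcap_{i=1}^{n+1}F_i$; then $\phi(\bar\lambda)\in\bigcap_{i=1}^{n+1}G(x_i)$, as required.

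The main obstacle is the base case, and within it the construction and continuity of the transport map $\phi$. The delicate points are genuinely Riemannian: $\phi$ is \emph{not} surjective onto $\mbox{co}(\{x_1,\ldots,x_{n+1}\})$, which is consistent with the non-closedness phenomenon discussed around Berger's conjecture in Section~\ref{Sec2.2}; but this is harmless, since for the KKM pullback one only needs the image of each simplicial face to land in the correct union, not surjectivity. One must, however, treat the coordinate conventions in~(\ref{ConvexCombination1}) carefully at points where a partial sum of the $\lambda_i$ vanishes, verifying that $\phi$ extends continuously across such boundary strata; this is where the continuity statement of Lemma~\ref{eq:ContExp} does the real work. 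Finally, it is worth recording that convexity of the sets $G(x_i)$ is \emph{not} used in the base case, the classical KKM lemma applying to arbitrary closed covers; convexity enters only in the first paragraph, where it is indispensable for invoking the Riemannian Helly theorem. Once the base case is secured, Theorem~\ref{Hellymain1} closes the argument with no further effort.
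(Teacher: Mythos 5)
Your proof is correct in outline and shares the paper's overall architecture: both arguments reduce $\cap_{x\in K}G(x)\neq\emptyset$ to the nonemptiness of every intersection of $n+1$ of the sets $G(x_i)$ and then close with Theorem~\ref{Hellymain1}. The divergence is entirely in the finite base case. The paper proves, by induction on $m\leq n+1$, that $\left(\cap_{i\in I_m}G(x_i)\right)\cap\overline{\mbox{co}(B_m)}\neq\emptyset$ via a variational argument: assuming the intersection empty, it minimizes $\psi=\max_i d\bigl(\cdot,\,G(x_i)\cap\overline{\mbox{co}(\tilde{B}_m)}\bigr)$ over the compact convex set $\overline{\mbox{co}(\tilde{B}_m)}$ and contradicts optimality by sliding along a geodesic toward a point $\tilde{x}_{i_0}$ furnished by the induction hypothesis; this uses Proposition~\ref{lemmaBBF2020} and, crucially, the convexity of each $G(x_i)$ through the convexity of the distance functions $\psi_i$. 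You instead transport the problem to the Euclidean simplex via the pseudo-convex combination map $\phi$ and invoke the classical KKM lemma for the closed sets $F_i=\phi^{-1}(G(x_i))$. Your route is shorter and, as you rightly observe, needs no convexity of the $G(x_i)$ in the base case; what it buys the paper's proof does not is generality of the covering sets, and what it costs is self-containedness, since it imports the Euclidean KKM lemma (hence Sperner or Brouwer) into an argument whose stated purpose is to derive KKM \emph{from} Helly by elementary convexity. One detail you flag but under-resolve: the continuity of $\phi$ at strata where a partial sum $S_k=\sum_{i\leq k}\lambda_i$ vanishes is not delivered by Lemma~\ref{eq:ContExp} alone, because the intermediate maps $\lambda\mapsto y_k$ are genuinely discontinuous there (the ratio $\lambda_k/S_k$ has no limit). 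What saves the final output is the contraction estimate $d(y_{k_0+1},x_{k_0+1})=(1-\lambda_{k_0+1}/S_{k_0+1})\,d(y_{k_0},x_{k_0+1})\leq (S_{k_0}/S_{k_0+1})\,C\to 0$ at the first index $k_0+1$ with positive limiting weight, which resets the construction uniformly in the (bounded but possibly oscillating) position of $y_{k_0}$; that estimate should be written out explicitly for the argument to be complete.
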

\begin{proof}
Given $m\in\mathbb {N}$, $m\leq n+1$, define $I_m=\{1,\ldots,m\}$ and $B_m=\{x_i :i\in I_m\}$. We claim that 
\begin{equation}\label{KKMtheorem}
\left(\cap_{i\in I_m}G(x_i)\right)\cap\overline{\mbox{co}({B_m})}\neq\emptyset.
\end{equation}
Indeed, first of all note that from assumption (b) this fact is true for $m = 1$. Now, following the inductive process, let us assume the statement (\ref{KKMtheorem}) is true for any set containing $m-1$ elements and take 
\begin{equation}\label{inclusionTec}
\tilde{x}_{j}\in\left(\cap_{i\in I_m\setminus\{j\}}G(x_i)\right)\cap\overline{\mbox{co}({B_m}\setminus\{x_{j}\})}, \qquad j\in I_m.
\end{equation}
Taking $\tilde{B}_m:=\{\tilde{x}_j:j\in I_m\}$, it follows that \mbox{co}$(\tilde{B}_m)\subset \overline{\mbox{co}(B_m)}$. Now, given $r=\max_{s\in I_m\setminus\{1\}}\{d(\tilde{x}_1,\tilde{x}_s)\}$ and taking into account that $B(\tilde{x}_1,r)$ is a convex set in Hadamard manifolds, from the definition of convex hull it is easy to see that $\mbox{co}(\tilde{B}_m)\subset B(\tilde{x}_1,r)$ and, consequently, $\overline{\mbox{co}(\tilde{B}_m)}$ is a convex and compact set.  Since $\overline{\mbox{co}(\tilde{B}_m)}\subset\overline{\mbox{co}(B_m)}$, to conclude the statement it is sufficient to prove that $\left(\cap_{i\in I_m}G(x_i)\right)\cap\overline{\mbox{co}({\tilde{B}_m})}\neq\emptyset.$ Let us assume, by contradiction, that 
\begin{equation}\label{equalitytec}
\left(\cap_{i\in I_m}G(x_i)\right)\cap\overline{\mbox{co}({\tilde{B}_m})}=\emptyset.
\end{equation}
Using Proposition~\ref{lemmaBBF2020} with $X=\mbox{co}({\tilde{B}_m})$ it follows that $\overline{\mbox{co}({\tilde{B}_m})}$ is convex. On the other hand, because $G(x)$ is a closed and convex set for each $x\in K$, we obtain that:
\begin{enumerate}
    \item [(i)] $\overline{\mbox{co}({\tilde{B}_m})}\cap G(x_i)$ is a closed and convex set for each $i\in I_m$;
    \item [(ii)] $M\ni x\mapsto\psi_i(x):=d(x,{G(x_i)\cap \overline{\mbox{co}({\tilde{B}_m})}})$ is convex for each $i\in I_m$;
    \item [(iii)] $M\ni x\mapsto\psi(x):=\max_{i\in I_m}\psi_i(x)$ is convex.
\end{enumerate} 
Since $\overline{\mbox{co}({\tilde{B}_m}})$ is a compact set, there exists $\hat{x}\in\mbox{argmin}\{\phi(x) : {x\in \overline{\mbox{co}({\tilde{B}_m}})}\}$ and, by combining (\ref{equalitytec}) with $(i)$ and definition of $\psi_i(\cdot)$ and $\psi(\cdot)$ in $(ii)$ and $(iii)$ respectively, it follows  that $\psi(\hat{x})>0$. Note that $\overline{\mbox{co}({\tilde{B}_m)}}\subset\overline{\mbox{co}(B_m)}$, by construction, and, by using assumption (b), $\mbox{co}(B_m)\subset \cup_{i=1}^{m}G(x_i)$. In particular,  $\overline{\mbox{co}({\tilde{B}_m)}}\subset \cup_{i=1}^{m}G(x_i)$ and, consequently, from the definition of $\hat{x}$ it follows that there exists $i_0\in I_m$ such that $\hat{x}\in G(x_{i_0})$. Now, take $\tilde{x}_{i_0}\in \tilde{B}_m\subset \overline{\mbox{co}({\tilde{B}_m)}}$ and consider the geodesic segment $\gamma_{\hat{x},\tilde{x}_{i_0}}([0,1])\subset \overline{\mbox{co}({\tilde{B}_m)}}$. Using convexity of $\psi_{i_0}$ and taking into account that $\psi_{i_o}(\hat{x})=0$ (this is because $\hat{x}\in \overline{\mbox{co}({\tilde{B}_m)}}\cap G(x_{i_0})$), for each $t\in [0,1]$, we have $\psi_{i_0}(\gamma_{\hat{x},\tilde{x}_{i_0}}(t))\leq t\psi_{i_0}(\tilde{x}_{i_0})$. Hence, $\psi_{i_0}(\gamma_{\hat{x},\tilde{x}_{i_0}}(t))$ tends to zero as $t$ goes to zero and, using again that $\psi(\hat{x})>0$, in particular, there exists $\tilde{t}$ sufficiently close to zero, we get \begin{equation}\label{inequaltyTec2}
\psi_{i_0}(\gamma_{\hat{x},\tilde{x}_{i_0}}(\tilde{t}))< \psi(\hat{x}).
\end{equation}
Now, take $i\in I_m\setminus\{i_0\}$ and note that, by using (\ref{inclusionTec}) with $j=i_0$, from the definition of $\psi_i(\cdot)$ in (i) it follows that $\psi_i(\tilde{x}_{i_0})=0$. Thus, convexity of $\psi_i(\cdot)$ implies that 
$\psi_{i}(\gamma_{\hat{x},\tilde{x}_{i_0}}(\tilde{t}))< (1-\tilde{t})\psi_{i}(\hat{x})<\psi(\hat{x})$. By combining the latter inequality with (\ref{inequaltyTec2}), and taking into account the definition of $\psi(\cdot)$ in (ii), we conclude that $\psi(\gamma_{\hat{x},\tilde{x}_{i_0}}(\tilde{t}))<\psi(\hat{x})$, which contradicts the fact that $\hat{x}\in\mbox{argmin}\{\phi(x) : {x\in \overline{\mbox{co}({\tilde{B}_m}})}\}$. Therefore, the desired result follows by using Theorem~\ref{Hellymain1}. 
\end{proof}
\begin{remark}\,
\begin{enumerate}
\item [(i)]
The construction of the proof of the previous theorem followed the same idea explored  in the linear context. In any case, we chose to present it in detail in order to make clear to the reader where some adjustments were necessary;
\item [(ii)] Others versions of the KKM lemma can be found, for example, in \cite{colao2012equilibrium,zhou2019,park2019riemannian} where, in item (b), $n + 1$ is replaced by a certain variable natural $m$. However, since this topic dealing with combinatorial convexity shows to have an interdisciplinary character, the version of the KKM lemma presented in the last result seems more appropriate, inclusively, for our purposes in the next section dealing with existence result of solutions for equilibrium problems.  \end{enumerate}
\end{remark}

\section{Equilibrium problem}\label{section4}
In this section, we explore the KKM's lemma to establish an existence result of solutions for equilibrium problems and to ensure the well-definedness of the resolvent and, in particular, of the proximal point algorithm for solving equilibrium problems.
\subsection{Existence for equilibrium problem}\label{4.1}
As already highlighted in the introduction of the paper,  the KKM lemma was used as a tool to establish result of existence for equilibrium problems; see, for example, \cite{colao2012equilibrium,park2019riemannian,bento2021,batista2015existence} for references dealing with this topic in the Riemannian setting, whose approaches extend the results of existence established directly to some important particular instances such as variational inequality \cite{nemeth2003variational,li2009existence,li2012variational} and Nash equilibrium points \cite{kristaly2010location,kristaly2014nash}.
Limiting the reference \cite{bento2021}, which we believe to be the most recent on the topic, it is possible to notice an important connection between combinatorial convexity, established by the KKM lemma, and “variational rationality” approach of human behavior,  characterizing the relevance of the theme to the interdisciplinary research. 

Next, we recall the general equilibrium problem. Given $\Omega \subset M$ a nonempty closed convex set and a bifunction $F: \Omega\times \Omega \rightarrow \mathbb{R}$ satisfying the property $F(x,x) =0$, for all $x \in \Omega$, the  equilibrium problem in the Riemannian context (denoted by EP) consists in:
\begin{equation}\label{ep}
\mbox{Find $x^{*}\in \Omega$}:\quad F(x^\ast,y) \geq 0,\qquad y \in \Omega.
\end{equation}
As far as we know, (\ref{ep}) was first introduced in the Riemannian setting in \cite{nemeth2003variational} in the particular case where $F(x,y):= \langle V(x), \exp^{-1}_{x}y \rangle$,  $x, y\in \Omega$, for $V(\cdot)$ being a single-valued vector field on Hadamard manifolds.
The existence result for (\ref{ep}) established in \cite{bento2021} took into account the following assumptions:
\begin{enumerate}
    \item [(i)] $F(\cdot,\cdot)$ is pseudomonotone, i.e.,  for each $(x,y)\in \Omega\times \Omega$, $F(x,y)\geq 0$ implies $F(y,x)\leq 0$;
    \item[(ii)] For every $x\in \Omega$,  $y\mapsto F(x,y)$ is convex and lower semicontinuous;
	\item[(iii)] For every $y\in \Omega$,  $x\mapsto F(x,y)$ is upper semicontinuous;
	\item[(iv)] Given $z_{0}\in M$ fixed, consider a sequence  $\{z^k\} \subset \Omega$ such that $\{d(z^k,z_0)\}$ converges to infinity as $k$ goes to infinity.
	Then, there exists $x^\ast \in \Omega$ and $k_0 \in \mathbb{N}$ such that 
	\[
	F(z^k, x^\ast)\leq 0, \qquad k\geq k_0;
	\]
	\item[(v)]
	Given $k\in \mathbb{N}$, for all finite set $\{y_1,\ldots, y_m \}\subset \Omega_{k}$, one has 
	\[
	\mbox{co} ( \{ y_1,\ldots, y_m\})\subset\bigcup_{i=1}^mL_{F}(k,y_i),
	\]
	$L_F (k, y) := \{x\in \Omega _k : F(y, x)\leq 0\}$ and $\Omega_k:=\{x\in \Omega:\; d(x,z_0)\leq k\}$.
\end{enumerate}
Since in the Existence Result what it is really need is the convexity of the set $L_F (k, y)$ for each $y\in\Omega$, then assumption (ii) can be exchanged for:
\begin{enumerate}
 \item[($ii^*$)] For every $y\in \Omega$,  $\{x\in \Omega: F(y,x)<0\}$ is convex and $y\mapsto F(x,y)$ is lower semicontinuous.
\end{enumerate}
Note that $(ii^*)$ naturally holds when $F(y,\cdot)$ is convex or quasiconvex for $y\in \Omega$.
In addition, as noted in \cite[Remark~5]{bento2021}, assumption (i) is a sufficient condition for happening assumption (v), which is true even in the case where $m=n+1$. Indeed, let us consider $y_1,\ldots, y_{n+1}\in \Omega_k$,  take $\bar{y}\in \mbox{conv}(\{y_{1},\ldots, y_{n+1}\})$ and let us suppose, for contradiction, that $\bar{y}\notin \bigcup_{i=1}^{n+1}L_{F}(k,y_i)$. But that tells us that,
	\begin{equation}\label{eq:equil1}
	F(y_{i},\bar{y})>0, \qquad i\in \{1,\ldots, n+1\}.
	\end{equation}
	Now, define the following set $B:=\{x\in \Omega_{k}:\; F(\bar{y}, x)<0\}$. In the particular case where $F$ is \mbox{pseudomonotone}, using (\ref{eq:equil1}) and taking into account that $B$ is convex (see assumption $(ii^*)$, we conclude that $\bar{y}\in B$. But this contradicts that $F(x,x)=0$ and the affirmation  is proved.  
	
After this discussion, follow the existence result for EP that comes as an application of Theorem~\ref{TheoremKKM}.
\begin{theorem}\label{teo1}
	If $F(\cdot,\cdot)$ is a bifuntion satisfying assumptions (i),($ii^*$),(iii) and (iv), then  $EP$ defined in (\ref{ep}) admits a solution. 
\end{theorem}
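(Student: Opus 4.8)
The plan is to localize the problem on the compact convex truncations $\Omega_k:=\{x\in\Omega : d(x,z_0)\le k\}$ (which are convex, being the intersection of the convex set $\Omega$ with a closed ball that is convex by Remark~\ref{remarkconvexball}, and compact, being bounded and closed), to solve a dual (Minty-type) problem on each $\Omega_k$ by the KKM lemma, and then to let $k\to\infty$, using the coercivity assumption (iv) to keep the truncated solutions from escaping to infinity.

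First, fix $k$ and consider the set-valued map $G_k\colon\Omega_k\to 2^{\Omega_k}$ given by $G_k(y):=L_F(k,y)=\{x\in\Omega_k : F(y,x)\le 0\}$. I would verify the hypotheses of Theorem~\ref{TheoremKKM} on the closed convex set $\Omega_k$: each $G_k(y)$ is closed because lower semicontinuity of $x\mapsto F(y,x)$ (assumption ($ii^*$)) makes the sublevel set $\{F(y,\cdot)\le 0\}$ closed, and it is convex by ($ii^*$); moreover each $G_k(y)$ is a closed subset of the compact set $\Omega_k$, hence compact, so condition (a) holds. The covering condition (b), namely $\mbox{co}(\{y_1,\ldots,y_{n+1}\})\subset\bigcup_{i=1}^{n+1}G_k(y_i)$, is exactly assumption (v) with $m=n+1$, which the discussion preceding the theorem derives from pseudomonotonicity (i). Theorem~\ref{TheoremKKM} then yields a point $x_k\in\bigcap_{y\in\Omega_k}G_k(y)$, i.e. a Minty-type solution on $\Omega_k$:
\[
F(y,x_k)\le 0,\qquad y\in\Omega_k.
\]

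Next I would pass to the limit. Here assumption (iv) enters to prevent $\{x_k\}$ from running off to infinity: if it did, (iv) applied to a subsequence $x_{k_j}$ with $d(x_{k_j},z_0)\to\infty$ would furnish a fixed $\bar z\in\Omega$ with $F(x_{k_j},\bar z)\le 0$ eventually, which one plays against the solution property at $\bar z$ (noting $\bar z\in\Omega_{k_j}$ for $j$ large) to force the truncated solutions to stay in a bounded region; equivalently, for $k$ large one may take $x_k$ in the interior $\{d(\cdot,z_0)<k\}$. Once boundedness is in hand, a subsequence converges, $x_{k_j}\to\bar x\in\Omega$, and for any fixed $y\in\Omega$ one has $y\in\Omega_{k_j}$ for $j$ large, so $F(y,x_{k_j})\le 0$; lower semicontinuity of $z\mapsto F(y,z)$ then gives $F(y,\bar x)\le\liminf_j F(y,x_{k_j})\le 0$. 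Hence $\bar x$ is a \emph{global} Minty solution, i.e. $F(y,\bar x)\le 0$ for all $y\in\Omega$.

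Finally I would convert this Minty solution into an equilibrium in the sense of (\ref{ep}). For fixed $y\in\Omega$ put $y_s:=\gamma_{\bar x,y}(s)\in\Omega$; the Minty inequality gives $F(y_s,\bar x)\le 0$, and convexity of $F(y_s,\cdot)$ (the natural instance of ($ii^*$) recorded just after its statement) together with $F(y_s,y_s)=0$ yields $0\le (1-s)F(y_s,\bar x)+sF(y_s,y)\le sF(y_s,y)$, so $F(y_s,y)\ge 0$ for $s\in(0,1]$. Letting $s\to 0^{+}$ and using upper semicontinuity of $x\mapsto F(x,y)$ (assumption (iii)) gives $F(\bar x,y)\ge\limsup_{s\to 0^{+}}F(y_s,y)\ge 0$, and since $y\in\Omega$ was arbitrary, $\bar x$ solves (\ref{ep}). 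The step I expect to be the main obstacle is the coercivity passage: because $\Omega$ may be unbounded, ruling out that the truncated solutions $x_k$ escape to infinity (equivalently, locating an interior truncated solution) is where assumption (iv) and pseudomonotonicity (i) must be combined with care; by contrast, the KKM application and the Minty--Stampacchia conversion are comparatively routine once the convexity and semicontinuity bookkeeping from ($ii^*$) and (iii) has been arranged.
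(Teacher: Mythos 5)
Your overall route---truncating to the compact convex sets $\Omega_k$, applying Theorem~\ref{TheoremKKM} to $G_k(y)=L_F(k,y)$ with the covering condition that the paper derives from (i) and ($ii^*$), passing to the limit via (iv), and finishing with a Minty--Stampacchia conversion---is exactly the argument the paper intends (the paper itself gives no proof, presenting the theorem as an application of Theorem~\ref{TheoremKKM} and deferring the remaining details to the cited existence result). However, the coercivity step, which you yourself flag as the main obstacle, is not closed and cannot be closed in the way you sketch. If $d(x_{k_j},z_0)\to\infty$, assumption (iv) furnishes $\bar z$ with $F(x_{k_j},\bar z)\le 0$ eventually, while the truncated solution property gives $F(\bar z,x_{k_j})\le 0$ (or, after conversion on $\Omega_{k_j}$, $F(x_{k_j},\bar z)\ge 0$); these statements are mutually consistent---at best they force $F(x_{k_j},\bar z)=0$---so no contradiction arises and boundedness of $\{x_k\}$ does not follow. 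The standard repair is not to prove boundedness but to observe that in the unbounded case the point $x^\ast$ from (iv) satisfies $d(x^\ast,z_0)<k$ for large $k$ and minimizes $F(x_k,\cdot)$ over $\Omega_k$ at the level $0=F(x_k,x_k)$; convexity of $F(x_k,\cdot)$ along geodesics $\gamma_{x^\ast,y}$, which initially remain in $\Omega_k$, then propagates $F(x_k,y)\ge 0$ to every $y\in\Omega$, so that $x_k$ itself already solves (\ref{ep}). Either branch of the dichotomy must therefore be argued, not just the bounded one.

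A second, smaller issue is that your convexity bookkeeping silently upgrades ($ii^*$) to (ii): both the assertion that $G_k(y)=\{x\in\Omega_k : F(y,x)\le 0\}$ is convex (a hypothesis of Theorem~\ref{TheoremKKM}) and the displayed inequality $0\le(1-s)F(y_s,\bar x)+sF(y_s,y)$ in the final conversion use convexity of $F(y,\cdot)$ itself, whereas ($ii^*$) only provides convexity of the strict sublevel set $\{x\in\Omega : F(y,x)<0\}$. The paper is admittedly loose on this very point, but in a self-contained proof you must either invoke (ii), or rerun both steps with the strict sublevel set and treat the boundary case $F(y_s,\bar x)=0$ separately.
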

\begin{remark}
An example that illustrating the usefulness of the previous result, in the sense that it applies to some situations not covered in the linear configuration can be found for example in \cite{bento2021}. It is worth mentioning that a result of similar existence was presented in \cite{colao2012equilibrium} by considering instead of assumptions (i) and (iv) the following strongest hypotheses:
\begin{enumerate}
    \item[($i^*$)] $F(\cdot,\cdot)$ is monotone, i.e.,  for each $(x,y)\in \Omega\times \Omega$, $F(x,y)+F(y,x)\leq 0$;
    \item[($iv^*$)] there exists a compact set $L\subset M$ and a point $y_0\in \Omega\cap L$ such that $F(x,y_0)<0, \quad x\in\Omega\setminus L$. 
\end{enumerate}
It is not difficult to see that ($i^*$) implies (i), and ($iv^*$) implies (iv).
\end{remark}

\subsection{Resolvents of bifunctions}

In this section, we present a new proposal for a resolvent in the Riemannian context associated with the bifunction $F(\cdot,\cdot)$ given as in (\ref{ep}). As noted in the introduction of the paper, a first definition of resolvent in that setting has appeared in \cite{colao2012equilibrium}. It is the set-valued operator $J_\lambda^F:M\rightrightarrows\Omega$, $\lambda>0$, given as in (\ref{bi-Function}) which, despite being a natural extension of the one introduced in the linear setting in \cite{combettes2005equilibrium}, its well-definedness as well as of the proximal point algorithm for solving EP depend on the convexity of the function $M\ni y\mapsto \langle u_z, \mbox{exp}^{-1}_zy\rangle$, $u_z\in T_zM$ that has been shown not to happens in general; see \cite{wang2016some,cruz2017note}.

Our alternative definition for the resolvent associated with $F (\cdot,\cdot)$ is given as follows: 
\begin{equation}\label{Newbi-Function}
J_{\lambda}^F(x):=\{z\in \Omega : \lambda F(z,y)+d(z,x)b_{\gamma_{z,x}}(y)\geq 0,\quad y\in \Omega\},
\end{equation}
where $\gamma_{z,x}:[0,+\infty[\to M$ is a geodesic ray parametrized by arc length starting from $z$ passing through $x$ and $b_{\gamma_{z,x}}(y)=\lim_{t\to+\infty}[d(y,\gamma_{z,x}(t))-t]$. It is not difficult to see that  $d(z,x)b_{\gamma_{z,x}}(y)=\langle z-x,y-z \rangle$ in the linear setting, showing that this new proposal in (\ref{Newbi-Function}) also retrieves the model proposed and explored in \cite{combettes2005equilibrium}.
Moreover, the new term that plays the role of regularization is now a convex function in general Hadamard manifolds, being a first step to fully answer to the problem posed in \cite[Section 5]{cruz2017note}. 

\begin{theorem}
Let $F(\cdot,\cdot)$ be a bifunction monotone and consider $\lambda>0$. Then, one has that the application $\Omega \ni (z,y)\mapsto F_{\lambda,x}(z,y)=\lambda F(z,y)+d(z,x)b_{\gamma_{z,x}}(y)$ is monotone. Moreover, if the assumptions (ii)-(iv) in Theorem~\ref{teo1} hold, then $J_\lambda^F(x)\neq\emptyset$ for all $x\in M$, $J_\lambda^F(\cdot)$ is single-valued and the fixed point set of $J_\lambda^F(\cdot)$ is the equilibrium point set of $F$.

\end{theorem}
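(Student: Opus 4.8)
The plan is to split the statement into its four assertions and handle them in order, reducing everything to the monotonicity of $F$, the convexity and upper semicontinuity of the Busemann term, the estimate $b_{\gamma_{z,x}}(y)\le d(y,x)-d(z,x)$ recorded in Section~\ref{Sec2.2}, and the existence result of Theorem~\ref{teo1}. For the monotonicity of $F_{\lambda,x}$, I would expand $F_{\lambda,x}(z,w)+F_{\lambda,x}(w,z)$; the part $\lambda\big(F(z,w)+F(w,z)\big)$ is nonpositive since $F$ is monotone, so it suffices to bound the Busemann part $d(z,x)b_{\gamma_{z,x}}(w)+d(w,x)b_{\gamma_{w,x}}(z)$. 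Applying $b_{\gamma_{z,x}}(y)\le d(y,x)-d(z,x)$ to each term (multiplying by the nonnegative factors $d(z,x)$, $d(w,x)$) and adding, the right-hand side collapses to $-(d(z,x)-d(w,x))^2\le 0$. This proves monotonicity and, since monotone implies pseudomonotone, simultaneously verifies hypothesis (i) of Theorem~\ref{teo1} for $F_{\lambda,x}$.

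For nonemptiness I would observe that $z\in J_\lambda^F(x)$ is exactly a solution of the equilibrium problem associated with $F_{\lambda,x}$, and that $F_{\lambda,x}(z,z)=0$ because $b_{\gamma_{z,x}}(z)=0$. It then remains to check that $F_{\lambda,x}$ inherits hypotheses $(ii^*)$, (iii) and (iv) of Theorem~\ref{teo1}. Condition $(ii^*)$ follows because $y\mapsto F_{\lambda,x}(z,y)$ is convex and lower semicontinuous: here I use assumption (ii), the convexity of $F(z,\cdot)$, together with the convexity and Lipschitz continuity of the Busemann function and $d(z,x)\ge 0$. Condition (iii) follows by showing $z\mapsto F_{\lambda,x}(z,y)$ is upper semicontinuous, combining upper semicontinuity of $F(\cdot,y)$ with that of $z\mapsto d(z,x)b_{\gamma_{z,x}}(y)$; the latter is the product of the continuous map $d(\cdot,x)$ and the upper semicontinuous map supplied by Lemma~\ref{lemmaUpperSem}, with the point $z=x$ treated separately via $\big|d(z,x)b_{\gamma_{z,x}}(y)\big|\le d(z,x)\,d(z,y)\to 0$. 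Condition (iv) follows because, for the $x^\ast$ produced by (iv) for $F$, the Busemann estimate gives $d(z^k,x)b_{\gamma_{z^k,x}}(x^\ast)\le d(z^k,x)\big(d(x^\ast,x)-d(z^k,x)\big)\to-\infty$, so $F_{\lambda,x}(z^k,x^\ast)\le 0$ for large $k$. Theorem~\ref{teo1} applied to $F_{\lambda,x}$ then yields $J_\lambda^F(x)\neq\emptyset$.

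The single-valuedness is the step I expect to be the main obstacle. Taking $z_1,z_2\in J_\lambda^F(x)$ and inserting $y=z_2$, respectively $y=z_1$, into the defining inequalities gives $F_{\lambda,x}(z_1,z_2)\ge 0$ and $F_{\lambda,x}(z_2,z_1)\ge 0$; adding and invoking the monotonicity from the first paragraph forces the $F$-part and the Busemann-part of the sum each to vanish. The difficulty is that the bound $-(d(z_1,x)-d(z_2,x))^2$ is not strict, so vanishing only yields $d(z_1,x)=d(z_2,x)=:\rho$ and equality in $b_{\gamma_{z_1,x}}(z_2)\le d(z_2,x)-d(z_1,x)$. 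From this equality I would extract rigidity: since $t\mapsto d(z_2,\gamma_{z_1,x}(t))-t$ is nonincreasing and already equals its limit at $t=\rho$, it is constant, so $d(z_2,\gamma_{z_1,x}(t))=t$ for all $t\ge\rho$. Choosing $t_2>\rho$ and setting $p:=\gamma_{z_1,x}(t_2)$, this reads $d(z_2,p)=d(z_2,x)+d(x,p)$, placing $x$ on the minimizing geodesic from $z_2$ to $p$; since $z_1,x,p$ already lie on the ray $\gamma_{z_1,x}$, uniqueness of geodesics in the Hadamard manifold forces $z_1$ and $z_2$ to lie on the same geodesic through $x$ and $p$, both at distance $\rho$ from $x$ on the side opposite $p$, whence $z_1=z_2$. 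The degenerate case $z_1=x$ or $z_2=x$ is simpler, using the exact value $b_{\gamma_{z,x}}(x)=-d(z,x)$, which turns the vanishing sum into $-d(z_2,x)^2=0$.

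Finally, because $J_\lambda^F$ is single-valued, a fixed point is a point $x$ with $x\in J_\lambda^F(x)$; evaluating the defining inequality at $z=x$ makes the regularization term $d(x,x)b_{\gamma_{x,x}}(y)$ vanish, so the condition reduces to $\lambda F(x,y)\ge 0$ for all $y\in\Omega$, equivalently (as $\lambda>0$) to $F(x,y)\ge 0$ for all $y$, i.e. $x$ solves $EP$; the converse implication is identical. This identifies the fixed-point set of $J_\lambda^F$ with the equilibrium point set of $F$, completing the proof.
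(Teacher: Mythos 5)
Your proposal follows essentially the same route as the paper's proof: the same Busemann estimate $b_{\gamma_{z,x}}(y)\le d(y,x)-d(z,x)$ yielding $F_{\lambda,x}(z,y)+F_{\lambda,x}(y,z)\le -(d(z,x)-d(y,x))^2$, the same verification of $(ii^*)$, (iii), (iv) via Lemma~\ref{lemmaUpperSem} and Theorem~\ref{teo1}, and the identical rigidity argument for single-valuedness (constancy of the nonincreasing function $t\mapsto d(z_2,\gamma_{z_1,x}(t))-t$ forcing equality in the triangle inequality and hence $z_1=z_2$ by uniqueness of geodesics). The only differences are cosmetic refinements in your favor: you treat the diagonal case $z=x$ explicitly in conditions (iii) and in single-valuedness, and you spell out the fixed-point identification, which the paper's proof leaves implicit.
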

\begin{proof}
Take $x\in M$. For each $(z, y)\in\Omega\times \Omega$, from the definition of the Busemann's function we obtain:
\begin{equation}\label{ineq-Busemann1}
b_{\gamma_{z,x}}(y)\leq d(y,x)-d(z,x),\quad b_{\gamma_{y,x}}(z)\leq d(z,x)-d(y,x). 
\end{equation}
Combining two last inequalities with definition of $F_{\lambda,x}(\cdot,\cdot)$ and using that $F(\cdot,\cdot)$ is monotone, we have:
\[
F_{\lambda,x}(z,y)+F_{\lambda,x}(y,z)\leq -( d(z,x)-d(y,x))^2,
\]
and the first part of the result is proved. Since $F_{\lambda,x}(y,y)=0$ for all $y\in\Omega$, to prove the second part it is sufficient to ensure that $F_{\lambda,x}(\cdot,\cdot)$, also satisfies conditions ($ii^*$), (iii), (iv).  Taking into account that $F(\cdot,\cdot)$ fulfills condition (ii),  $F_{\lambda,x}(\cdot,\cdot)$ satisfies $(ii^*)$ because $b_{\gamma_{z,x}}(\cdot)$ is convex and $F_{\lambda,x}(z,\cdot)$ as a sum of convex functions is also a convex function. To see that $F_{\lambda,x}(\cdot,y)$ also satisfies condition (iii) just note that from Lemma~\ref{lemmaUpperSem} $b_{\gamma_{(\cdot),x}}(y)$ it is upper semicontinuously and, as $F(\cdot,\cdot)$ satisfies (ii), then $F_{\lambda,x}(\cdot,y)$ as sum of upper semicontinuously functions it is also upper semicontinuously. Let us now to prove that $F_{\lambda,x}(\cdot,\cdot)$ satisfies (iv). 
First of all, given $z_{0}\in M$, consider a sequence $\{z^k\}\subset\Omega$ such that $\{d(z^k,z_0)\}$ converges to infinity as $k$ goes to infinity and take $\tilde{z}$ a solution of the problem in (\ref{ep}) which exists due Theorem~\ref{teo1}.
	Since $\{d(z^k,z_0)\}$ converges to infinity as $k$ goes to infinity, in particular, $\{d(z^k,z)\}$ also converges to infinity. Moreover, as $\{z^{k}\}\subset\Omega$, monotonicity of $F(\cdot,\cdot)$ implies that $F(z^{k},\tilde{z})\leq 0$. Hence, using first inequality in (\ref{ineq-Busemann1}), we have
	$F_{\lambda,x}(z^k,\tilde{z})\leq d(z^k,x)(d(\tilde{z},x)-d(z^k,\tilde{z}))$, from where we can conclude that $F_{\lambda,x}(\cdot,\cdot)$ satisfies the condition (iv). Therefore, using Theorem~\ref{teo1} it follows that  $J^F_{\lambda}(x)\neq \emptyset$ for all $x\in M$.

    To prove the last part, given $x\in M$, 
	take $z_1, z_2 \in J_{\lambda}^F(x)$. From the definition of the resolvent $J^F_\lambda(\cdot)$ in (\ref{Newbi-Function}), we have
	\begin{equation}\label{ineq1}
	\lambda F(z_1, z_2) +d(z_1,x)b_{\gamma_{z_1,x}}(z_2)\geq 0,
	\end{equation}
	\begin{equation}\label{ineq2}
	\lambda F(z_2,z_1)+d(z_2,x)b_{\gamma_{z_2,x}}(z_1)\geq 0.
	\end{equation}
	Since $F(\cdot,\cdot)$ is monotone and $\lambda>0$, combining inequalities (\ref{ineq1}) and (\ref{ineq2}), we obtain:
\[
d(z_1,x)b_{\gamma_{z_1,x}}(z_2) + d(z_2,x)b_{\gamma_{z_2,x}}(z_1) \geq 0.
\]
The last inequality combined with  (\ref{ineq-Busemann1}) implies that $d(z_1,x) = d(z_2,x)$ and, hence, $b_{\gamma_{z_1,x}}(z_2) \leq 0$ and  $b_{\gamma_{z_2,x}}(z_1) \leq 0$. But this information, combined again with (\ref{ineq1}) and (\ref{ineq2}), allows us to obtain that $F(z_1, z_2) \geq ~0$ and  $ F(z_2, z_1) \geq 0$. Since $F$ is monotone, two last inequalities imply that $F(z_1, z_2) = F(z_2, z_1) = 0$. Consequently, using again (\ref{ineq1}) and (\ref{ineq2}) we have $b_{\gamma_{z_1,x}}(z_2) = b_{\gamma_{z_2,x}}(z_1) = 0$. Now, remember that the function $[0,+\infty[\ni t\mapsto \psi(t)= d(z_2,\gamma_{z_1,x} (t))-t$ is non-increasing and $b_{\gamma_{z_1,x}}(z_2) = \lim_{t \to + \infty} \psi(t) = 0$. Moreover, as $d(z_1,x) = d(z_2,x)$, in particular we have $\psi(d(z_1,x)) =0$. Thus, it is easy to see that $\psi(\alpha) = 0$ for all $\alpha > d(z_1, x)$. Hence, for $\alpha > d(z_1,x)$ we have:
	\[
	d(z_2, \gamma_{z_1,x} (\alpha)) = \alpha = \alpha - d(x,z_1) + d(x,z_2) = d(x, \gamma_{z_1,x} (\alpha)) + d(x,z_2),
	\]
	from which we can conclude that $z_1=z_2$, which concludes the proof of the theorem. 
	\end{proof}

\end{document}